\numberwithin{equation}{section}
\theoremstyle{plain}
\newtheorem{theorem}[subsubsection]{{Theorem}}
\newtheorem{proposition}[subsubsection]{{Proposition}}
\newtheorem{corollary}[subsubsection]{{Corollary}}
\theoremstyle{definition}
\newtheorem{definition}[subsubsection]{{Definition}}
\newtheorem{example}[subsubsection]{{Example}}
\theoremstyle{remark}
\newtheorem{remark}[subsubsection]{{Remark}}
\newcommand{\Z}{\mathbb{Z}}
\newcommand{\F}{\mathbb{F}}
\newcommand{\Q}{\mathbb{Q}}
\newcommand{\R}{\mathbb{R}}
\newcommand{\Aut}{\textnormal{Aut}}
\newcommand{\Char}{\textnormal{char}}
\newcommand{\End}{\textnormal{End}}
\newcommand{\Frac}{\textnormal{Frac}}
\newcommand{\Gal}{\textnormal{Gal}}
\newcommand{\GL}{\textnormal{GL}}
\newcommand{\Hom}{\textnormal{Hom}}
\newcommand{\Ht}{\textnormal{ht}}
\newcommand{\Mat}{\textnormal{Mat}}
\newcommand{\Nm}{\textnormal{Nm}}
\newcommand{\Spec}{\textnormal{Spec}}
\newcommand{\sep}{\textnormal{sep}}
\begin{document}

\title{Local Monodromy of $1$-Dimensional $p$-Divisible Groups}

\author{Tristan Phillips}
\address{Department of Mathematics \\ Dartmouth College
  \\ Hanover, NH~~03755 USA}
\email{tristanphillips72@gmail.com}

\subjclass[2020]{Primary 11S31; Secondary 14L05, 11S15}
\keywords{p-divisible groups; formal groups; local fields; ramification theory}

\begin{abstract}
Let $G$ be a $p$-divisible  group over a complete discrete valuation ring $R$ of characteristic $p$. The generic fiber of $G$ determines a Galois representation $\rho$. The image of $\rho$ admits a ramification filtration and a Lie filtration. We relate these filtrations in the case $G$ is one dimensional, giving an equicharacteristic version of Sen's theorem in this setting. This result generalizes a result of Gross. Additionally, we prove that the representation associated to the \'etale part of $G$ is irreducible, generalizing a result of Chai.
\end{abstract}

\maketitle

\section{Introduction}
\label{intro}

Let $R$ be a complete discrete valuation ring whose fraction field $K$ has algebraically closed residue field $k$ of characteristic $p$. Let $L$ be a Galois extension of $K$. Then $\Gal(L/K)$ has a \emph{ramification filtration} which reflects certain arithmetic properties of $L/K$. If $\Gal(L/K)$ is also a $p$-adic Lie group, then $\Gal(L/K)$ also has a \emph{Lie filtration} which reflects certain analytic properties of $L/K$. One can ask how these two filtrations compare. As the Lie filtration is often more easily understood, a relation between these two filtrations gives an avenue to study certain arithmetic properties of $L/K$. 

In the case that $K$ has characteristic $0$, Serre conjectured a strong connection between the Lie and ramification filtrations \cite{Ser67}. This conjecture was proved by Wyman in the $\Z_p$ case \cite{Wym69}, and by Sen in the general case \cite{Sen72}. 

In the case that $K$ has characteristic $p>0$, the ramification filtration can exhibit very general behavior which prohibits any meaningful relation with the Lie filtration in general (see \cite[Remark 3]{GK88}). However, if one restricts to situations `coming from geometry' then some relations between the Lie and ramification filtrations in characteristic $p$ have been discovered. This was first explored by Gross for local monodromy groups of one-dimensional $p$-divisible groups \cite{Gro79}, and more recently by Kramer-Miller for monodromy groups of unit root $F$-isocrystals \cite{K-M18}. In this article we further explore the case of local monodromy groups of $p$-divisible groups, following in the footsteps of \cite{Gro79} and \cite{Cha00}.

 Let $G$ be a connected $1$-dimensional $p$-divisible group over $R$. It is known that the generic Newton polygon of $G$ has at most two slopes, $0$ and $1/g$, where $g\in \Z_{\geq 1}$ is a positive integer. The slope $0$ occurs with some multiplicity $d\in \Z_{\geq 0}$, and the slope $1/g$ occurs with multiplicity $1$. Then the height of $G$ is $s=g+d$. The action of the absolute Galois group $\mathfrak{G}_K:=\Gal(K^{\sep}/K)$ on the (generalized) Tate module associated to the generic fiber of $G$ determines a Galois representation
\[
\rho=\prod_\lambda \rho^\lambda: \mathfrak{G}_K\to \prod_{\lambda} \GL_{d_\lambda}(D_\lambda)
\]
where $D_\lambda$ is the $\Q_p$-division algebra with invariant $\lambda$. This representation will be described in more detail in Section \ref{sec:p-divisible groups}. The image of $\rho$, which we shall refer to as the \emph{local monodromy group of $G$}, is a closed subgroup of $\prod \GL_{d_\lambda}(D_\lambda)$ and thus inherits the structure of a $p$-adic Lie group. 

Suppose $G$ is $1$-dimensional. In this case $\rho=\rho^{0/1}\times \rho^{1/g}$. When $g=s-1$, Gross exhibited a relation between the ramification and Lie filtrations \cite{Gro79}. The case when $g=1$ and $s$ arbitrary was studied by Chai in \cite{Cha00}. However, there is a mistake in Chai's result comparing the filtrations \cite[Theorem 4.6]{Cha00} (see Appendix \ref{appendix} for a discussion of this mistake). In both of these cases an important role is played by an \emph{open image theorem}. In this setting the classical open image theorem is due to Igusa, whose results imply that when $s=2$ the image of $\rho^{0/1}$ is open in $\Z_p^\times$ \cite{Igu68}. Gross showed that when $g=s-1$, $\rho^{0/1}$ and $\rho^{1/g}$ are both surjective and thus have open image. Chai showed that when $g=1$ the image of $\rho^{1/1}=\det(\rho^{0/1})$ is open in $\Z_p^\times$. Beyond these cases, little is known about when $\rho$ has open image, though one can find some other related results at the end of \cite{Cha00} and in \cite{AN10}. We remark that there has, however, been much progress on the surjectivity of the local monodromy representations of \emph{universal} $p$-divisible groups \cite{Str10,Lau10,Tia09}.

The main result (Theorem \ref{thm:1/gBreaks}) of this article extends the result of Gross to all $g$ with $1\leq g<d$, under an open image assumption. In particular, we provide a fixed version of \cite[Theorem 4.6]{Cha00}. Our results give further support for an equicharacteristic version of Sen's theorem in this setting. We also prove that for any $1$-dimensional $p$-divisible group, the slope zero representation $\rho^{0/1}$ is irreducible, generalizing a result of Chai \cite{Cha00}. Our methods exploit the equivalence of categories between connected $p$-divisible groups and formal groups, which allows us to work very explicitly with formal power series. We then use Artin--Schreier theory to compute the ramification filtration.

\subsection{Organization}

Sections \ref{sec:p-adicLieGroups}-\ref{sec:formal groups} cover background material and our basic setup. In Section \ref{sec:p-adicLieGroups} we recall basic facts about $p$-adic Lie groups and their Lie filtrations. In Section \ref{sec:Ramification} we review higher ramification theory, covering basic facts about the lower and upper ramification filtrations. We also recall basic facts about Newton polygon and discuss ramification breaks in generalized Artin--Schreier extensions. In Section \ref{sec:p-divisible groups} we consider Galois representations of $p$-divisible groups. We review Gross's generalized Tate modules and the Galois representations arising from them. We also state an important theorem of Gross concerning Galois characters associated to those representations (Theorem \ref{thm:G2.7}). In Section \ref{sec:formal groups} we review the theory of formal groups and formal modules. We also show how to rephrase the Galois representations of $p$-divisible groups from Section \ref{sec:p-divisible groups} in terms of formal modules.

Sections \ref{sec:0/1-towers}-\ref{sec:GalChar} cover results on the local monodromy group of 1-dimensional p-divisible groups. In Section \ref{sec:0/1-towers} we study the $p$-adic Lie towers of local fields arising from $\rho^{0/1}$. Studying properties of these towers we are able to prove that the representation attached in $\rho^{0/1}$ is irreducible (Theorem \ref{thm:irreducible}). In Section \ref{sec:1/gTowers} we study $p$-adic Lie towers of local fields arising from $\rho^{1/g}$. We prove our main result, an equicharacteristic version of Sen's theorem for general 1-dimensional $p$-divisible groups (Theorem \ref{thm:1/gBreaks}). In Section \ref{sec:GalChar} we use Theorem \ref{thm:1/gBreaks} and Theorem \ref{thm:G2.7} to prove results on the ramification behavior of the Galois characters associated  to $\rho^{0/1}$ and $\rho^{1/g}$.

\section*{Acknowledgements}
This article came out of the authors comprehensive exam at the University of Arizona. Many thanks to Bryden Cais for suggesting this project and for giving much guidance and encouragement. The author would also like to thank Benedict Gross for clarifying a remark in \cite{Gro79}, and Anthony Kling for proofreading an earlier draft of this paper. The author is especially grateful for several referees for many helpful corrections and suggestions, and for pointing out the mistake in \cite{Cha00} and sketching how to fix it using Artin--Schreier theory. 
During the completion of this work the author was supported by the National Science Foundation, via grant DMS-2303011.

\section{$p$-Adic Lie Groups}\label{sec:p-adicLieGroups}

We cover some basic facts about $p$-adic Lie groups and their Lie filtrations. We refer the reader to \cite{DdMS99} for details.

\begin{definition}[Uniform pro-$p$ group]
A pro-$p$ group $H$ is \emph{uniform} if it satisfies the following conditions:
\begin{itemize}
\item[(i)] $H$ is finitely generated.
\item[(ii)] $H^p$, the subgroup generated by $\{h^p:h\in H\}$, is a normal subgroup of $H$ and $H/H^p$ is abelian. 
\item[(iii)] For all $i\geq 0$ the $p$-th power map induces an isomorphism
\[
H^{p^i}/H^{p^{i+1}} \xrightarrow{\sim} H^{p^{i+1}}/H^{p^{i+2}}.
\]
\end{itemize}
\end{definition}

The following gives an algebraic characterization of $p$-adic Lie groups:

\begin{theorem}{\textnormal{\cite[Theorem 8.32]{DdMS99}}}.
A group $G$ is a $p$-adic Lie group if and only if it is a topological group containing an open uniform pro-$p$ subgroup.
\end{theorem}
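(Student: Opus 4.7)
The plan is to prove both implications separately, leaning on the Lazard correspondence between uniform pro-$p$ groups and their associated $\Z_p$-Lie algebras.

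For the forward direction, suppose $G$ is a $p$-adic Lie group of dimension $n$. The exponential map provides a local homeomorphism of some neighborhood of the identity with an open subset of $\Q_p^n$, so I would first extract a compact open subgroup $U \leq G$ corresponding to a small ball $p^k\Z_p^n$. This $U$ is profinite and admits an open pro-$p$ subgroup $P$ (since the exponential image gives a basis of pro-$p$ neighborhoods of the identity). To refine $P$ to a uniform subgroup, I would pass down the descending chain $P \geq P^p \geq P^{p^2} \geq \cdots$: for $i$ sufficiently large, $P^{p^i}$ becomes powerful (commutators lie in $(P^{p^i})^p$, as can be read off the BCH behavior on a small enough exponential neighborhood) and torsion-free, hence uniform. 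Since $P^{p^i}$ is open in $G$, this yields the desired subgroup.

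For the reverse direction, suppose $G$ contains an open uniform pro-$p$ subgroup $H$. Since $H$ is open, it suffices to endow $H$ with a $p$-adic analytic structure for which multiplication and inversion are analytic, and then transport this structure across all of $G$ by left translation. The plan is: (i) fix a minimal topological generating set $h_1,\ldots,h_d$ of $H$ and use uniformity to obtain a homeomorphism $\Z_p^d \to H$ via $(a_1,\ldots,a_d) \mapsto h_1^{a_1}\cdots h_d^{a_d}$ (coordinates of the second kind); (ii) equip the underlying set of $H$ with a $\Z_p$-Lie algebra structure $L(H)$ via the limits $x+y := \lim_n (x^{p^n} y^{p^n})^{p^{-n}}$ and $[x,y] := \lim_n [x^{p^n}, y^{p^n}]^{p^{-2n}}$, verifying the axioms from the uniform hypothesis; (iii) invoke the Baker--Campbell--Hausdorff formula to show that the group law on $H$, when written in the coordinates of step (i), is given by a convergent power series, whence $H$ is $p$-adic analytic.

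The main obstacle is step (iii): one must establish convergence of the BCH series on all of $H$. Uniformity is essential here, since it is precisely what controls the $p$-adic valuations of iterated brackets tightly enough to absorb the denominators appearing in BCH. In effect, step (iii) constitutes the Lazard correspondence between uniform pro-$p$ groups and $\Z_p$-Lie algebras of the appropriate type, and is the deepest ingredient in the DdMS treatment. Once this is in place, extending the analytic structure from $H$ to $G$ via translations, and checking that the Lie group structure does not depend on the choice of open uniform subgroup (so the equivalence is canonical), are routine.
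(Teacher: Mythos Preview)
The paper does not actually prove this statement: its entire proof reads ``See sections 8.3 and 8.4 of \cite{DdMS99}.'' Your proposal is a faithful outline of precisely that argument in \cite{DdMS99}---coordinates of the second kind on a uniform group, the intrinsic $\Z_p$-Lie algebra $L(H)$ built from the limits you wrote down, and BCH convergence furnishing the analytic structure for one implication; extraction of a compact open pro-$p$ subgroup and descent to a powerful (hence uniform) open subgroup for the other. So you and the paper are in complete agreement, with the difference that you have actually sketched the content of the citation.

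One small caution on your forward direction: the claim that $P^{p^i}$ eventually becomes powerful is not automatic for an arbitrary pro-$p$ group $P$, and the way \cite{DdMS99} organizes this is to first establish that $P$ has finite rank (this is where the analytic structure enters, via the standardness of small open subgroups), and then invoke the purely group-theoretic fact that a pro-$p$ group of finite rank contains an open normal uniform subgroup. Your parenthetical appeal to BCH on a small exponential neighborhood is morally the same analytic input, but if you were to write this out in full you would want to make explicit either the finite-rank step or the direct BCH estimate on commutators that forces $[P^{p^i},P^{p^i}]\subseteq (P^{p^i})^p$ for large $i$.
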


\begin{proof}
See sections 8.3 and 8.4 of \cite{DdMS99}.
\end{proof}

\begin{corollary}{\textnormal{\cite[Corollary 8.34(ii)]{DdMS99}.}}
If $G$ is a compact $p$-adic Lie group, then $G$ contains an open normal uniform subgroup of finite index.
\end{corollary}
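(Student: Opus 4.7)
The plan is to combine the preceding theorem---which produces one open uniform pro-$p$ subgroup $U\le G$---with a normalization argument, the point being that $U$ itself need not be normal in $G$.

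First, since $G$ is compact and $U$ is open, $[G:U]$ is finite, so $U$ has only finitely many $G$-conjugates $U_1=U,U_2,\ldots,U_r$. The normal core
\[
N\;:=\;\bigcap_{i=1}^{r}U_i
\]
is then a finite intersection of open subgroups (hence open), is permuted by $G$-conjugation (hence normal in $G$), and has finite index. Since $N\subseteq U$, the subgroup $N$ is itself pro-$p$, and being open in $G$ it inherits the structure of a compact $p$-adic Lie group that is topologically finitely generated and pro-$p$.

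Next I would extract from $N$ a characteristic open subgroup that is uniform. The key structural input from \cite{DdMS99} (Chapters 3--4) is that in a topologically finitely generated pro-$p$ group which is $p$-adic analytic, the lower $p$-series $P_i(N)$ consists of characteristic open subgroups with trivial intersection, and $P_i(N)$ is uniform for all sufficiently large $i$. Fix such an $i$ and set $W:=P_i(N)$; then $W$ is uniform, open in $N$, and characteristic in $N$.

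Finally, since $W$ is characteristic in $N$ and $N$ is normal in $G$, the subgroup $W$ is normal in $G$; and since both $N\subseteq G$ and $W\subseteq N$ are open, $W$ is open in $G$, hence of finite index. This yields the desired subgroup. The only nontrivial step is the middle one: that a compact pro-$p$ $p$-adic Lie group has a uniform characteristic open subgroup. This is the main obstacle, and rests on the structure theory of powerful pro-$p$ groups---specifically, that sufficiently deep terms of the lower $p$-series of a finitely generated $p$-adic analytic pro-$p$ group are powerful, torsion-free, and finitely generated, and therefore uniform.
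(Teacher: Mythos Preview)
The paper does not give its own proof of this corollary; it merely records the statement with a citation to \cite[Corollary 8.34(ii)]{DdMS99} and then uses it. So there is nothing to compare against in the paper itself.

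That said, your sketch is a correct outline of the standard argument in \cite{DdMS99}. Passing from the open uniform $U$ supplied by the preceding theorem to its normal core $N$ is exactly right, and your identification of the ``main obstacle'' is accurate: one needs that a topologically finitely generated pro-$p$ group which is $p$-adic analytic (equivalently, of finite rank) has a \emph{characteristic} open uniform subgroup, and the lower $p$-series provides this. The only place to be slightly more careful is the claim that $P_i(N)$ is uniform for all large $i$: what one actually uses is that $N$ has finite rank (being open in the uniform group $U$), so some $P_i(N)$ is powerful, and then eventually torsion-free and hence uniform---this is the content of the structure theory in Chapters~3--4 of \cite{DdMS99} that you cite. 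With that caveat, your argument is sound and is essentially the proof given in \cite{DdMS99}.
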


By the corollary, if $G$ is a $p$-adic Lie group then it admits a normal uniform subgroup $H$. We get a filtration of $G$ by the normal subgroups $G(i):=H^{p^i}$,
\[
G\geq G(0)\geq G(1)\geq \cdots
\]
referred to as a \emph{Lie filtration}.

\begin{theorem}[Closed subgroup theorem]\label{thm:ClosedSubgroup}
Every closed subgroup $\Gamma$ of a $p$-adic Lie group $G$ is a $p$-adic Lie group. Additionally, if $H$ is an open uniform subgroup of $G$, then $\Gamma$ inherits a Lie filtration
\[
\Gamma\geq \Gamma(0)\geq \Gamma(1)\geq \cdots
\]
where $\Gamma(i)=H^{p^i}\cap \Gamma$ are open uniform subgroups of $\Gamma$.
\end{theorem}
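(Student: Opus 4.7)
The plan is to reduce the result to Theorem 8.32 by exhibiting an open uniform pro-$p$ subgroup of $\Gamma$. Since $H$ is open in $G$, the intersection $H\cap\Gamma$ is open in $\Gamma$; since $\Gamma$ is closed in $G$, this intersection is a closed subgroup of the uniform pro-$p$ group $H$. The theorem thus reduces to a statement purely about uniform pro-$p$ groups: every closed subgroup of $H$ is itself uniform, and more precisely $H^{p^i}\cap\Gamma$ is uniform for each $i\ge 0$.

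First I would invoke the equivalence of categories between uniform pro-$p$ groups and powerful $\Z_p$-Lie algebras of finite rank, implemented by the $\log/\exp$ correspondence developed in \cite{DdMS99}. Write $L:=\log H$: this is a free $\Z_p$-module of finite rank satisfying $[L,L]\subseteq pL$ (for $p$ odd). Under the correspondence, closed subgroups of $H$ biject with closed $\Z_p$-Lie subalgebras of $L$, and the operation $H\mapsto H^{p^i}$ matches $L\mapsto p^iL$. Let $L'\subseteq L$ be the closed Lie subalgebra attached to $H\cap\Gamma$. As a closed $\Z_p$-submodule of a finitely generated free $\Z_p$-module, $L'$ is itself finitely generated and free. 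The inherited bracket yields the powerfulness condition $[L',L']\subseteq pL'$, provided $L'$ is pure in $L$; this can always be arranged by replacing $H$ by $H^{p^n}$ for $n$ sufficiently large. Exponentiating then recovers $H\cap\Gamma$ (or a suitable $p$-power subgroup of it) as a uniform pro-$p$ group.

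For the filtration, I apply the same machinery to each uniform pro-$p$ group $H^{p^i}$: its closed subgroup $H^{p^i}\cap\Gamma$ corresponds to $p^iL\cap L' = p^iL'$ under purity, whose exponential is uniform and open in $\Gamma$. This produces the chain $\Gamma\ge\Gamma(0)\ge\Gamma(1)\ge\cdots$ with $\Gamma(i)=H^{p^i}\cap\Gamma$ as claimed. The main obstacle is the purity/saturation step: a closed $\Z_p$-submodule of a free $\Z_p$-module of finite rank need not be a direct summand, so one cannot directly deduce $[L',L']\subseteq pL'$ from $[L,L]\subseteq pL$. The remedy is to pass to $H^{p^n}$ for $n$ large enough to force the saturation, and the case $p=2$ requires the analogous argument with the powerfulness condition defined using $p^2$ in place of $p$. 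The substantive algebra is already present in \cite{DdMS99}, and the argument above is essentially an application of that theory.
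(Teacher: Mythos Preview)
The paper does not prove this theorem; its proof is the single line ``See \cite[Theorem 9.6]{DdMS99}.'' Your sketch is therefore not in competition with an argument in the paper, but is an attempt to outline what the cited reference does. The strategy you describe---intersect $\Gamma$ with the open uniform subgroup $H$, pass to the $\Z_p$-Lie algebra via $\log$, and analyse the resulting subalgebra---is indeed the machinery developed in \cite{DdMS99}, and you have correctly located the saturation (purity) step as the essential technical point.

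One issue merits a remark. Your fix for the saturation obstruction is to replace $H$ by $H^{p^n}$ for $n$ large; this makes $L'$ pure in $p^nL$ and hence establishes that $\Gamma(i)=H^{p^i}\cap\Gamma$ is uniform for $i\ge n$. It does not, however, directly give uniformity of $\Gamma(i)$ for \emph{all} $i\ge 0$ as the theorem is phrased, and your earlier sentence ``every closed subgroup of $H$ is itself uniform'' is stated too strongly---that is precisely the assertion whose potential failure forces the saturation step in the first place. What the argument actually delivers is an open uniform subgroup of $\Gamma$ together with a filtration by the $\Gamma(i)$ that eventually coincides with the lower $p$-series of that uniform subgroup. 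For the paper's applications only the tail of the filtration is ever used, so the distinction is harmless in context, but you should be precise about which statement your argument establishes.
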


\begin{proof}
See \cite[Theorem 9.6]{DdMS99}. 
\end{proof}

\begin{example}
Let $p$ be an odd prime. The group $G:=\GL_n(\Z_p)$ is a $p$-adic Lie group with open uniform subgroup $H:=1+p M_n(\Z_p)$, where $M_n(\Z_p)$ is the set of $n\times n$ matrices with entries in $\Z_p$. It admits a Lie filtration
\[
\GL_n(\Z_p)\geq G(0)\geq G(1)\geq \cdots
\]
where 
\[
G(i):=H^{p^i}=1+p^i M_n(\Z_p).
\]

By the closed subgroup theorem any closed subgroup $\Gamma\subseteq \GL_n(\Z_p)$, such as the image of a $p$-adic Galois representation, is a $p$-adic Lie group with Lie filtration
\[
\Gamma\geq \Gamma(i)\geq \Gamma(i+1)\geq \cdots
\]
where $\Gamma(i)=\Gamma\cap H^{p^i}$.

\end{example}

\section{Ramification Theory}\label{sec:Ramification}

In this section we review some basic facts from ramification theory. For additional details on ramification theory see \cite{Ser79}.

Let $K$ be a field complete with respect to a discrete valuation $v_K$. Let $R_K=\{a\in K : v_K(a)\geq 0\}$ denote the valuation ring of $K$. Suppose that $K$ is such that its residue field $k$ is algebraically closed. Set $p=\Char(k)$. Let $K^\times$ denote the unit group of $K$. Normalize $v_K$ so that its value group on $K^\times$ is $\Z$, and fix an extension of $v_K$ to a fixed separable closure $K^{\sep}$ of $K$. 

Let $L$ be a finite separable extension of $K$ with valuation ring $R_L$. Set $\Gamma_{L/K}=\Hom_K(L,K^{\sep})$. Note that if $L/K$ is Galois then $\Gamma_{L/K}=\Gal(L/K)$.

\subsection{Lower ramification filtration}

For each $x\in\R_{\geq -1}$ define the \emph{$x$-th lower ramification group} to be
\[
\Gamma_x:=\left\{\sigma\in \Gamma_{L/K} : v_K(\sigma(a)-a)\geq \frac{x+1}{[L:K]} \text{ for all } a\in R_L \right\}. 
\]
This is equivalent to 
\begin{align}\label{def:ramification_group}
\Gamma_x:=\left\{\sigma\in \Gamma_{L/K} : v_K(\sigma(\alpha)-\alpha)\geq \frac{x+1}{[L:K]} \right\} 
\end{align}
where $\alpha$ is such that $R_L=R_K[\alpha]$ as $R_K$ algebras \cite[IV \S 1 Lemma 1]{Ser79}. 

\begin{proposition}
\label{prop:RamFil}
The lower ramification groups form a decreasing, left-continuous chain of subgroups; namely,
\begin{itemize}
\item[(i)]  $\Gamma_{-1}=\Gamma_{L/K}$. 
\item[(ii)]  $\Gamma_x\subseteq \Gamma_y$ for all $x\geq y$.
\item[(iii)]  $\Gamma_x=\{1\}$ for all sufficiently large $x$.
\item[(iv)]  $\bigcap_{y<x}\Gamma_y = \Gamma_x$ for all $x$.
\end{itemize}
\end{proposition}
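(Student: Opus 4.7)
The plan is to verify each of the three claims directly from the definition. None of the items requires any deep input: the entire proof amounts to unwinding the definition, together with the key observation that in the complete case the valuation $v_K$ extends uniquely to $L$.

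For (1), since $K$ is complete and $L/K$ is finite separable, $v_K$ has a unique extension to $L$, and hence any $K$-embedding $\sigma:L\hookrightarrow \overline{K}$ preserves the valuation. Consequently, for $a\in R_L$ both $a$ and $\sigma(a)$ lie in the valuation ring $R_{\overline{K}}$ of $\overline{K}$, so $\sigma(a)-a\in R_{\overline{K}}$ and $v_K(\sigma(a)-a)\geq 0 = 0/[L:K]$. Thus every $\sigma\in\Gamma_{L/K}$ lies in $\Gamma_0$; the reverse inclusion is tautological. Part (2) is immediate: if $x\geq y$, then any $\sigma$ satisfying $v_K(\sigma(a)-a)\geq x/[L:K]$ for all $a\in R_L$ also satisfies the weaker bound $y/[L:K]$, so $\Gamma_x\subseteq \Gamma_y$.

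For (3), I would invoke the alternative characterization stated after the definition: $\sigma\in\Gamma_x$ iff $v_K(\sigma(\alpha)-\alpha)\geq x/[L:K]$, where $\alpha$ generates $R_L$ as an $R_K$-algebra. Since $\alpha$ then also generates $L$ over $K$, a $K$-embedding $\sigma$ is determined by $\sigma(\alpha)$, so $\sigma(\alpha)\neq \alpha$ for every $\sigma\neq 1$. Hence for each nontrivial $\sigma$ the quantity $c_\sigma := v_K(\sigma(\alpha)-\alpha)$ is a finite nonnegative real number. Setting $M:=\max_{\sigma\neq 1}c_\sigma$ (a finite maximum over the finite set $\Gamma_{L/K}\setminus\{1\}$), for any $x > [L:K]\cdot M$ one has $x/[L:K] > c_\sigma$ for every nontrivial $\sigma$, so no such $\sigma$ lies in $\Gamma_x$, giving $\Gamma_x = \{1\}$.

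The only subtlety worth flagging is the appeal in part (1) to the uniqueness of the valuation extension, which is essential because $\Gamma_{L/K}$ is a set of $K$-embeddings into $\overline{K}$ rather than automorphisms of $L$; but this is a standard consequence of $K$ being complete and so poses no genuine obstacle.
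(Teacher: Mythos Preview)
The paper states this proposition without proof, treating it as standard background (with a general reference to \cite{Ser79} at the start of the section), so there is no argument in the paper to compare against. Your direct verification from the definition is correct and is exactly the sort of elementary check one would supply: the use of uniqueness of the valuation extension (from completeness of $K$) to handle part~(1) in the possibly non-Galois setting is the right observation, and parts~(2) and~(3) are immediate from the definition and the alternative characterization via a generator $\alpha$ of $R_L$ over $R_K$.

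One minor remark: the proposition's phrasing ``chain of subgroups'' is slightly loose, since $\Gamma_{L/K}=\Hom_K(L,\overline{K})$ is only a group when $L/K$ is Galois; your proof correctly treats the $\Gamma_x$ as subsets and does not rely on any group structure, which is appropriate.
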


\begin{definition}
We call $x$ a \emph{break} in the (lower) ramification filtration if $\Gamma_x\neq \Gamma_{x+\epsilon}$ for all $\epsilon>0$. 
\end{definition}

When $L/K$ is a finite abelian extension, the Hasse-Arf theorem says that the breaks occur only at integers. However, if $L/K$ is not abelian the breaks may occur at other rational numbers. When $L/K$ is tamely ramified there is a unique break occurring at $x=0$.

\subsection{Upper ramification filtration} 

Define the \emph{Herbrand transition function}:
\begin{align*}
\phi_{L/K}: [0,\infty)&\to[0,\infty)\\
x&\mapsto  \int_0^x \frac{\# \Gamma_t }{[L:K]} dt.
\end{align*}

When $M$ is an extension of $K$ containing $L$ we have the identity
\begin{align}\label{eq:HerbrandID}
\phi_{M/K}=\phi_{L/K}\circ \phi_{M/L}.
\end{align}
Note that the Herbrand transition function is a strictly increasing continuous function, and therefore has an inverse $\psi_{L/K}$ defined on $[0,\infty)$. Sometimes we will instead write $\phi_{\Gamma}$ and $\psi_{\Gamma}$ for $\phi_{L/K}$ and $\psi_{L/K}$.

For each $x\in \R_{\geq 0}$ we define the \emph{$x$-th upper ramification group} to be 
\[
\Gamma^x:=\Gamma_{\psi(x)}.
\]
By Proposition \ref{prop:RamFil} the upper ramification groups form a filtration of $\Gamma$.

\begin{definition}
We call $x$ a \emph{break} in the (upper) ramification filtration if $\Gamma^x\neq \Gamma^{x+\epsilon}$ for all $\epsilon>0$. 
\end{definition}

By the Hasse-Arf theorem, all breaks in the upper ramification filtration occur at rational numbers.

The upper ramification filtration passes well to quotients in the following sense: For $M$ a finite Galois extension of $K$ containing $L$, set $G=\Gal(M/K)$ and $H=\Gal(M/L)$. Then for all $x\geq 0$,
\[
\Gamma^x=(G/H)^x=G^x H/H.
\] 
From this we may define an upper ramification filtration on infinite Galois extensions $M/K$:
\begin{align*}
\Gal(M/K)^x&:=\left\{ \sigma\in \Gal(M/K) :
  \begin{tabular}{@{}c@{}}
       \text{for all finite extensions $L/K$ contained in M}, \\
   $\sigma\in \Gamma_{L/K}^x \Gal(M/L)$
  \end{tabular}
\right\}.
\end{align*}
In this case we say $x$ is a break in the filtration if it occurs as a break in any finite quotient. 

Suppose now that $\Gamma=\Gal(L/K)$ is a $p$-adic Lie group, so that it has a Lie filtration
\[
\Gamma\supseteq \Gamma(1)\supseteq \Gamma(2)\supseteq \cdots.
\]
To avoid trivialities we assume $\Gamma(1)\neq 1$. We want to relate this Lie filtration to the ramification filtration. From the following proposition we get a weak relation, that will be useful to us later on:

\begin{proposition}{\cite[Lemma 3.4]{Sen72}}
\label{prop:ramLie}
For any open subgroup $H\leq \Gamma$,
\[
\Gamma^x\cap H=H^{\psi_{\Gamma/H}(x)}.
\]
\end{proposition}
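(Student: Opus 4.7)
The plan is to unwind the upper numbering on both sides into the lower numbering, apply the compatibility of the lower filtration with passage to subgroups, and then translate back using Herbrand's identity~(\ref{eq:HerbrandID}).

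First, by definition of upper numbering, $\Gamma^x = \Gamma_{\psi_\Gamma(x)}$ and $H^y = H_{\psi_H(y)}$ for all $y\geq 0$, so the statement to be proved is equivalent to
\[
\Gamma_{\psi_\Gamma(x)}\cap H \;=\; H_{\psi_H(\psi_{\Gamma/H}(x))}.
\]
The key step is the standard compatibility of the lower ramification filtration with passage to subgroups: for any subgroup $H\leq \Gamma = \Gal(L/K)$ corresponding to the intermediate field $M = L^H$, and any $y\geq 0$,
\[
\Gamma_y \cap H \;=\; H_y.
\]
This is a direct unwinding of the definition of $\Gamma_y$: the defining condition $v_K(\sigma(a)-a)\geq y/[L:K]$ constrains only the action of $\sigma$ on $R_L$, and for $\sigma\in H=\Gal(L/M)$ it is equivalent to the analogous condition $v_M(\sigma(a)-a)\geq y/[L:M]$ defining $H_y$, once one accounts for the fact that both reduce to the condition $v_L(\sigma(a)-a)\geq y$ on $L$ alone. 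Applying this with $y = \psi_\Gamma(x)$ reduces the claim to the identity $\psi_\Gamma = \psi_H\circ \psi_{\Gamma/H}$.

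To establish this last identity, I would apply Herbrand's identity~(\ref{eq:HerbrandID}) to the tower $K\subset M\subset L$. Since $\Gal(L/K) = \Gamma$, $\Gal(L/M) = H$, and $\Gal(M/K) = \Gamma/H$, the identity reads $\phi_\Gamma = \phi_{\Gamma/H}\circ \phi_H$, and taking inverses gives $\psi_\Gamma = \psi_H\circ \psi_{\Gamma/H}$, which is exactly what is needed. Substituting then yields
\[
\Gamma^x\cap H \;=\; H_{\psi_H(\psi_{\Gamma/H}(x))} \;=\; H^{\psi_{\Gamma/H}(x)}.
\]

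The main obstacle is the subgroup compatibility $\Gamma_y\cap H = H_y$: it must be checked against the paper's (slightly nonstandard) normalization of the lower filtration, which uses $v_K$ together with the factor $1/[L:K]$ rather than Serre's convention of $v_L$ with an integer shift. The crucial observation is that both normalizations reduce to the intrinsic condition $v_L(\sigma(a)-a)\geq y$ for $a\in R_L$, which makes no reference to the base field and so defines the same subset of $H$ whether we view $\sigma$ as an element of $\Gal(L/K)$ or of $\Gal(L/M)$. Once this is verified, the remainder of the argument is a mechanical combination of the definition of $\psi$ and Herbrand's identity.
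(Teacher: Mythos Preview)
Your proof is correct and follows the same route as the paper's: unwind the upper numbering to lower numbering via $\psi$, invoke the subgroup compatibility $\Gamma_y\cap H=H_y$, and then apply Herbrand's identity~(\ref{eq:HerbrandID}). The paper's proof is the one-line chain
\[
\Gamma^x\cap H= \Gamma_{\psi_{\Gamma}(x)}\cap H=H_{\psi_\Gamma(x)}=H^{\phi_{H}\circ \psi_\Gamma(x)}=H^{\psi_{\Gamma/H}(x)},
\]
which is exactly your argument with the Herbrand step phrased as $\phi_H\circ\psi_\Gamma=\psi_{\Gamma/H}$ rather than your equivalent $\psi_\Gamma=\psi_H\circ\psi_{\Gamma/H}$; your added discussion of why $\Gamma_y\cap H=H_y$ survives the paper's normalization is extra care the paper omits.
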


\begin{proof}
We have
\[
\Gamma^x\cap H= \Gamma_{\psi_{\Gamma}(x)}\cap H=H_{\psi_\Gamma(x)}=H^{\phi_{H}\circ \psi_\Gamma(x)}=H^{\psi_{\Gamma/H}(x)},
\]
where the final equality follows from (\ref{eq:HerbrandID}).
\end{proof}

Applying the Proposition for $H=\Gamma(n)$, it follows that for all $n\geq 1$,
\[
\Gamma^x\cap \Gamma(n)=\Gamma(n)^{\psi_{\Gamma/\Gamma(n)}(x)}.
\]

\subsection{Newton Polygons}

Let us quickly recall the basic facts about Newton Polygons that will be used in this paper.

\begin{definition}[Newton Polygon]
Let $f(x)\in x^n+a_{n-1}x^{n-1}+\cdots+a_1x+a_0\in K[x]$. Then the Newton polygon of $f$, denoted $\mathfrak{N}(f)$, is the lower convex hull of the set of points $(i,v_K(a_i))$ in $\R^2$.
\end{definition}

\begin{proposition}[The theorem of the Newton Polygon]
\label{prop:NP}
Suppose that $f$ is separable over $K$. For each side $s_i$ of the Newton polygon of $f$, let $\lambda_i$ denote the slope of $s_i$ and $\mu_i$ denote the length of the projection of $s_i$ to the $x$-axis. If $\lambda_i<\infty$, then there are exactly $\mu_i$ roots of $f$ with valuation $-\lambda_i$.
\end{proposition}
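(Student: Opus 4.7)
The plan is to read off the Newton polygon of $f$ directly from the valuations of its roots in an algebraic closure. I would begin by factoring $f(x)=\prod_{i=1}^n(x-\alpha_i)$ over $\overline{K}$ and relabeling the roots so that $v_K(\alpha_1)\leq v_K(\alpha_2)\leq\cdots\leq v_K(\alpha_n)$. Let $\rho_1<\rho_2<\cdots<\rho_r$ be the distinct values that occur, with $\rho_j$ occurring with multiplicity $m_j$, and set $M_j:=m_1+\cdots+m_j$. Since the coefficients are the elementary symmetric functions, $a_{n-k}=(-1)^k e_k(\alpha_1,\ldots,\alpha_n)$, the ultrametric inequality gives
\[
v_K(a_{n-k}) \;\geq\; \min_{|I|=k}\sum_{i\in I} v_K(\alpha_i) \;=\; v_K(\alpha_1)+\cdots+v_K(\alpha_k),
\]
so every point $(n-k,v_K(a_{n-k}))$ lies on or above the piecewise linear curve joining the candidate corners $C_j:=(n-M_j,\,\rho_1 m_1+\cdots+\rho_j m_j)$ for $j=0,1,\ldots,r$.

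Next I would upgrade this inequality to an equality at each $C_j$. When $k=M_j$, the subset $\{1,2,\ldots,M_j\}$ is the \emph{unique} index set of size $k$ that achieves the minimum above, so in $e_{M_j}$ the corresponding monomial has strictly smaller valuation than every other, and no cancellation is possible. Hence each $C_j$ actually lies on the graph of $i\mapsto v_K(a_i)$, pinning down the piecewise linear curve through the $C_j$ as a lower bound that is attained.

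A short computation then shows that the segment from $C_{j-1}$ to $C_j$ has slope $-\rho_j$ and horizontal projection $m_j$. Since $\rho_1<\cdots<\rho_r$, these slopes are strictly increasing from left to right, so the polygonal path through the $C_j$ is already convex and therefore coincides with the lower convex hull of the points $(i,v_K(a_i))$. Reading off the data, the side of slope $\lambda_j=-\rho_j$ has $\mu_j=m_j$, matching the number of roots of valuation $\rho_j=-\lambda_j$, as claimed.

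I expect the main obstacle to be the middle step: promoting the ultrametric inequality to an equality at the corners $C_j$, and simultaneously ruling out extraneous vertices of the Newton polygon strictly between consecutive $M_j$'s. Both come down to the ultrametric principle that a sum attains the valuation of its smallest-valuation summand precisely when that summand is unique, which in this setting reduces to the fact that $\{1,\ldots,M_j\}$ is the only subset of size $M_j$ minimizing $\sum_{i\in I}v_K(\alpha_i)$. The separability hypothesis plays no essential role here beyond ensuring that the roots $\alpha_i$ are well-defined in $\overline{K}$; its significance is really that the count ``$\mu_j$ roots'' refers to distinct roots in the separable closure, as the rest of the paper will use.
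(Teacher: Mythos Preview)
The paper does not actually supply a proof of this proposition; it is stated as the classical ``theorem of the Newton polygon'' and used as background, with the subsequent Tate's Lemma following it immediately. So there is no in-paper argument to compare against.

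Your argument is the standard one and is essentially correct. A couple of small clarifications you might tighten before writing it up formally: first, in the corner step, the key point is that any size-$M_j$ subset other than $\{1,\ldots,M_j\}$ must omit some index $\leq M_j$ (valuation $\leq \rho_j$) and include some index $>M_j$ (valuation $\geq \rho_{j+1}>\rho_j$), so its total strictly exceeds $\rho_1 m_1+\cdots+\rho_j m_j$; this is exactly what you are using, and it is worth making the strict inequality explicit. Second, once you have (i) every $(i,v_K(a_i))$ on or above the polygonal path through the $C_j$, (ii) each $C_j$ actually attained, and (iii) the path convex, the identification with the lower convex hull is automatic and there is no separate issue of ``extraneous vertices'' to rule out---your worry in the final paragraph is already handled. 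Finally, your remark on separability is accurate: the factorization over $\overline{K}$ and the valuation count go through without it, and the hypothesis only serves the later applications in the paper where roots are taken in $K^{sep}$.
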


\subsection{Generalized Artin--Schreier Extensions}
Let $q=p^r$ be a prime power and assume $\F_q\subset K$. Let $a\in K$ be such that $v_K(a)<0$ and $p\nmid v_K(a)$. These assumptions ensure that $a$ is not contained in the set $\{b^p-b : b\in K\}$, and therefore the polynomial $x^q-x-a\in K[x]$ is irreducible by \cite[Lemma 1.3]{GS91}. Such a polynomial is called a \emph{generalized Artin--Schreier polynomial}. If $\alpha$ is a root of a generalized Artin--Schreier polynomial (in an unspecified algebraic closure of $K$), then we call the extension $K(\alpha)$ a \emph{generalized Artin--Schreier extension}. Note that the roots of $x^q-x-a$ are $\alpha+c$ for $c\in \F_q\subset K$ and thus $K(\alpha)$ is the splitting field of $x^q-x-a$. Additionally, it is known that $\Gal(K(\alpha)/K)\cong \F_q^+$ is an elementary abelian $p$-extension of degree $q$ \cite[Porposition 1.2]{GS91}. 
 
The following proposition generalizes a well known result of Hasse \cite{Has35} (see also \cite[IV \S 2 Exercise 5]{Ser67}).

\begin{proposition}\label{prop:Artin-Schreier_breaks}
Let $L/K$ be the generalized Artin--Schreier extension obtained by adjoining a root of $x^q-x-a$. Then $\Gal(L/K)$ has a unique upper and lower ramification break at $-v_K(a)$.
\end{proposition}

\begin{proof}
We first show that $L/K$ is totally ramified. Let $\alpha$ be a root of $x^q-x-a$,  let $e(L/K)$ be the ramification index of the extension $L/K$, and let $v_L$ be a discrete valuation on $L$ normalized so that its value group on $L^\times$ is $\Z$. We have that
\begin{equation*}
e(L/K)v_K(a) = v_L(a) = v_L(\alpha^q-\alpha) = \min\{q v_L(\alpha), v_L(\alpha)\}.
\end{equation*}
Since we have assumed $v_K(a)<0$, it follows that $\min\{q v_L(\alpha), v_L(\alpha)\}<0$. Therefore $v_L(\alpha)<0$, and so
\begin{equation}\label{eq:valpha}
e(L/K) v_K(a)=\min\{q v_L(\alpha), v_L(\alpha)\}= q v_L(\alpha).
\end{equation}
Since we have assumed that $p\nmid v_K(a)$, this implies $q|e(L/K)$. As $e(L/K)\leq [L:K]=q$, we conclude that $e(L/K)=q$. This shows that the extension $L/K$ is a totally ramified.

Since $e(L/K)=q$, equation (\ref{eq:valpha}) implies that the valuation of $\alpha$ is \[
v_L(\alpha)=\frac{e(L/K)\cdot v_K(a)}{q}=v_K(a).
\]
 Note that for each $c\in \F_q$ we have that $\alpha+c$ is also a root of $x^q-x-a$. Let $\sigma_c$ be the element of the Galois group $\Gal(L/K)$ which sends $\alpha$ to $\alpha+c$. 

Since $p\nmid v_K(a)$, by the extended Euclidean algorithm there exist integers $s,t$ with $s>0$ and $1=sv_K(a)+tq$. Letting $\pi_K$ be a uniformizer for the valuation ring $R_K$, we have that $\pi_L:= \alpha^{s}\pi_K^t$ has valuation $v_L(\pi_L)=1$, and is therefore a uniformizer for $R_L$. Observe that 
\[
\frac{\sigma_c(\pi_L)}{\pi_L}=\frac{(\alpha+c)^s \pi_K^t}{\alpha^s \pi_K^t}=1+\frac{cs}{\alpha}+\cdots+\frac{c^s}{\alpha^s}.
\]
Therefore, for $c\in \F_q^\times$ we have
\begin{align}\label{eq:tau-break}
v_K(\sigma_c(\pi_L)-\pi_L)=v_K\left(\pi_L (cs/\alpha+\cdots+c^s/\alpha^s)\right)=\frac{1}{q}-v_K(\alpha)=\frac{1-v_L(\alpha)}{q}.
\end{align}
It now follows from the definition of the ramification groups (\ref{def:ramification_group}) that $L/K$ has a unique lower ramification break at $-v_L(\alpha)=-v_K(a)$. It is easy to check that this corresponds to a unique break in the upper ramification filtration at the same point.
\end{proof}

\begin{remark}
The extension $L/K$ in Proposition \ref{prop:Artin-Schreier_breaks} is an example of a \emph{one-dimensional elementary abelian extension} (see \cite{Eld09}). For more refined ramification data of one-dimensional elementary abelian extensions see work of Elder and Keating \cite{Eld09,EK20}.
\end{remark}

\section{Galois Representations of $p$-Divisible Groups in Characteristic $p$}
\label{sec:p-divisible groups}

Let $K$ be a field of characteristic $p>0$ and let $G=(G_v)$ be a $p$-divisible group over $K$. Let $\lambda=r/s\in [0,1]\cap \Q$ be a rational number in lowest terms. Let $G_\lambda$ denote the $p$-divisible group of slope $\lambda$, defined to be the unique (up to isomorphism) $p$-divisible group over $\F_p$ with Dieudonn\'e module 
\[
D(G_\lambda)=\Z_p[F,V]/(F^{s-r}=V^r,\ FV=VF=p).
\]
All endomorphisms of $G_\lambda$ are defined over $\F_{p^s}$  and
\[
\End_{\F_{p^s}}(G_\lambda)\cong \mathcal{O}_{G_\lambda}
\]
where $\mathcal{O}_{G_\lambda}$ is an order in $D_\lambda$, the central division algebra over $\Q_p$ with Hasse invariant $\lambda$. By the Dieudonn\'e--Manin classification Theorem~\cite{Die57,Man63}, the $p$-divisible group $G$ is isogenous over $K^{\sep}$ to
\[
\prod_{\lambda\in \Q} G_\lambda^{d_\lambda}
\]
where the $d_\lambda$ are integers uniquely determined by $G$, all but finitely many of which are zero.

\begin{definition}[Generalized Tate Module, \cite{Gro79}] Define the \textit{generalized Tate module}
\[
T^\lambda(G):=\Hom_{K^{\sep}}(G_\lambda,G)
\]
and corresponding $\Q_p$-vector space
\[
V^\lambda(G):=\Hom_{K^{\sep}}(G_\lambda,G)\otimes_{\Z_p}\Q_p.
\]
\end{definition}

Observe that $V^\lambda(G)$ is a right $D_\lambda$-module of dimension $d_\lambda$ and a left module of dimension $d_\lambda$ over the opposite algebra $D_\lambda^{op}$. There is a left action of the absolute Galois group $\mathfrak{G}_K=\Gal(K^{\sep}/K)$ on $V^\lambda(G)$ given by
\begin{align*}
\mathfrak{G}_K\times V^\lambda(G) &\to V^\lambda(G)\\
(\sigma, f)&\mapsto \sigma\circ f \circ \sigma^{-1}.
\end{align*}
When $K$ contains $\F_{p^s}$ this action is $D_\lambda^{op}$ linear and we thus get a representation
\[
\rho^\lambda: \mathfrak{G}_K\to \Aut_{D_\lambda^{op}}(V^\lambda(G))=\GL_{d_\lambda}(D_\lambda).
\]
When $K$ contains $\overline{\F}_p$ we get a representation
\[
\rho=\prod_{\lambda\in [0,1]\cap\Q} \rho^\lambda: \mathfrak{G}_K\to \prod_{\lambda\in [0,1]\cap\Q} \GL_{d_\lambda}(D_\lambda).
\]
For any ring $R$, let $R^\times$ denote the group of units of $R$. We define a determinant map on $\prod \GL_{d_{\lambda}}(D_\lambda)$ as follows:
\[
\det=\prod_{\lambda\in [0,1]\cap\Q} \Nm_{\lambda}: \prod_{\lambda\in [0,1]\cap\Q} \GL_{d_\lambda}(D_\lambda)\to \Q_p^\times,
\]
where $\Nm_\lambda$ is the reduced norm in $\Mat_{d_{\lambda}}(D_\lambda)$ over $\Q_p$. Composing with $\rho$ we get a $p$-adic Galois character $\chi:=\det\circ \rho$. 

\begin{theorem}\cite[Theorem 2.7]{Gro79}\label{thm:G2.7}
If $G$ is a $p$-divisible group defined over $R$, then $\chi=1$ in $\Hom(\mathfrak{G}_K, \Q_p^\times)$. 
\end{theorem}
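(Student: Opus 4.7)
The plan is to decompose the determinant character along the slopes of $G$, rewrite each factor as an ordinary $\Q_p$-determinant of a Galois representation, and then use the absence of a cyclotomic-type character in characteristic $p$ to conclude triviality.

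First I would reduce to the isotypic case. By the Dieudonn\'e--Manin classification, $G$ is $\overline{K}$-isogenous to $\prod_\lambda G_\lambda^{d_\lambda}$, and any $\overline{K}$-isogeny is automatically $\mfG_K$-equivariant and induces isomorphisms on each generalized Tate space $V^\lambda$. Thus $\chi = \prod_\lambda \chi^\lambda$ with $\chi^\lambda := \Nm_\lambda \circ \rho^\lambda$, and it suffices to analyze each slope separately. Next, via the regular representation $D_\lambda \hookrightarrow \End_{\Q_p}(D_\lambda)$, the reduced norm coincides with the ordinary $\Q_p$-determinant of the image, so $\chi^\lambda$ is simply the determinant of $\rho^\lambda$ acting on $V^\lambda(G)$ viewed as a $\Q_p$-vector space. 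Assembling all slopes, $\chi = \det_{\Q_p}(\rho \mid V(G))$ where $V(G):=\bigoplus_\lambda V^\lambda(G)$ is the total rational Tate space, of $\Q_p$-dimension equal to the height of $G$.

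To trivialize this determinant I would pass through the equivalence between connected $p$-divisible groups and formal groups (to be reviewed in Section~\ref{sec:formal groups}), together with Cartier duality, realizing $V(G)$ in terms of the $p^n$-torsion of a formal group $\mcF$ built from $G$ and its dual $G^\vee$. A Vieta-style identification then presents $\chi(\sigma)$ as a limit of products of Galois conjugates of $[p^n]$-torsion points of $\mcF$; this product lies in $K^\times$ by $\mfG_K$-invariance. In characteristic $p$ the leading term of $[p]_\mcF$ is a $p$-power Frobenius up to a unit, so the Vieta product lands in the subgroup of $p$-power elements of $K^\times$, and the resulting continuous character $\mfG_K \to \Z_p^\times$ is forced to be trivial using $\mu_{p^\infty}(\overline{K}) = \{1\}$ and the continuity of $\chi$.

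The main obstacle will be the explicit Vieta-style identification, and in particular the handling of the \'etale component ($\lambda = 0$) via Cartier duality: a priori this character could take values in any unramified character of $\mfG_K$, and tying it to the connected dual requires care. The crucial characteristic-$p$ input is the triviality of $\mu_{p^\infty}$ over $\overline{K}$, which eliminates the cyclotomic contribution that renders the determinantal character nontrivial in characteristic $0$.
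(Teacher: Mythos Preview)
The paper does not prove this statement; it simply quotes \cite[Theorem 2.7]{Gro79}, so there is no in-paper proof to compare your proposal against. That said, your sketch contains a concrete error and a significant gap.

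The identification of the reduced norm with the $\Q_p$-determinant via the regular representation is incorrect. For a central simple $\Q_p$-algebra of degree $n$ (so of $\Q_p$-dimension $n^2$), left multiplication by $a$ satisfies $\det_{\Q_p}(a) = \Nm(a)^n$, not $\Nm(a)$. Here $V^\lambda(G)$ is a right $D_\lambda$-module of rank $d_\lambda$, hence of $\Q_p$-dimension $d_\lambda s_\lambda^2$ where $s_\lambda$ is the degree of $D_\lambda$ over $\Q_p$, and one finds $\det_{\Q_p}(\rho^\lambda\mid V^\lambda) = (\chi^\lambda)^{s_\lambda}$. Thus your assembled $\Q_p$-determinant equals $\prod_\lambda (\chi^\lambda)^{s_\lambda}$, not $\chi = \prod_\lambda \chi^\lambda$; since the exponents $s_\lambda$ vary with $\lambda$, triviality of the former does not imply triviality of $\chi$, nor even that $\chi$ has finite order. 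The final paragraph is also too vague to assess: you have not specified what the ``Vieta product'' actually is for a $p$-divisible group with several slopes, the treatment of the \'etale part via Cartier duality is only gestured at, and the claim that $\mu_{p^\infty}(\overline{K})=\{1\}$ together with continuity forces a $\Z_p^\times$-valued character to be trivial is false as stated (any unramified character landing in $\mu_{p-1}\subset\Z_p^\times$ is a counterexample). If you want to salvage this route you will need a direct identification of $\chi$ itself---not a power of it---with a concrete Galois-invariant quantity.
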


Later we will apply Theorem \ref{thm:G2.7} in the case that $G$ has dimension 1. In this case $\rho=\rho^{0/1}\oplus \rho^{1/g}$, where $g$ is the dimension of the generic fiber of $G$. Theorem \ref{thm:G2.7} then says that $\Nm_{1/g}\circ \rho^{1/g}$ is the inverse of $\det\circ \rho^{0/1}$ in $\Q_p^\times$. In particular, the Galois characters $\det\circ \rho^{0/1}$ and $\Nm_{1/g}\circ \rho^{1/g}$ determine each other.

\section{Formal Groups and Formal Modules}
\label{sec:formal groups}

There is an equivalence of categories between connected $p$-divisible groups of (finite) height $h$ over $R$ and formal groups of height $h$ over $R$ \cite[Proposition 1]{Tat67}. In this section we introduce formal groups and formal modules. We will later see how the theory of formal modules can be used to give a more explicit description of the Galois representation $\rho$ introduced in the previous section.

\begin{definition}[Formal Group]
A \emph{(commutative one-parameter) formal group} $\mathcal{F}$ over a ring $R$ is a power series $\mathcal{F}(X,Y)\in R[[x,y]]$ satisfying the following properties:
\begin{itemize}
\item[(i)] $\mathcal{F}(X,0)=X$ and $\mathcal{F}(0,Y)=Y$.
\item[(ii)] $\mathcal{F}(X,F(Y,Z))=\mathcal{F}(\mathcal{F}(X,Y),Z)$.
\item[(iii)] $\mathcal{F}(X,Y)=\mathcal{F}(Y,X)$.
\end{itemize}
\end{definition}

\begin{definition}[Morphisms of Formal Groups]
Let $\mathcal{F}$ and $\mathcal{G}$ be formal groups over $R$. A \emph{homomorphism} from $\mathcal{F}$ to $\mathcal{G}$ is a power series $f\in R[[T]]$ with no constant term satisfying
\[
f(\mathcal{F}(X,Y))=\mathcal{G}(f(X),f(Y)).
\]
 The formal groups $\mathcal{F}$ and $\mathcal{G}$ are isomorphic over $R$ if there are homomorphisms $f:\mathcal{F}\to \mathcal{G}$ and $g:\mathcal{G}\to \mathcal{F}$ defined over $R$ satisfying $f(g(T))=g(f(T))=T$.
\end{definition}

\begin{definition}[Formal Module]
Let $A$ be a ring and assume $R$ is an $A$-algebra. A \emph{(one-dimensional one-parameter) formal $A$-module} is a (one-parameter) formal group $\mathcal{F}$ over $R$ equipped with a ring homomorphism $[\cdot]_\mathcal{F}:A\to \End_R(\mathcal{F})$ such that for each $a\in A$ there is an endomorphism $[a]_\mathcal{F}$ of $\mathcal{F}$ satisfying $[a]_\mathcal{F}(X)= aX+ \textnormal{(higher degree terms)}$.
\end{definition}

\begin{definition}[Morphisms of Formal Modules]
Let $\mathcal{F}$ and $\mathcal{G}$ be formal $A$-modules over $R$. A \emph{homomorphism} from $\mathcal{F}$ to $\mathcal{G}$ is a homomorphism of formal groups $f:\mathcal{F}\to\mathcal{G}$ satisfying 
\[
f\circ [a]_\mathcal{F}=[a]_\mathcal{G} \circ f
\]
for all $a\in A$. 
\end{definition}

Let $\Hom_R(\mathcal{F},\mathcal{G})$ denote the set of $A$-module $R$-homomorphisms over from $\mathcal{F}$ to $\mathcal{G}$. Set $\End_R(\mathcal{F})=\Hom_R(\mathcal{F},\mathcal{F})$.

\subsection{Formal $A$-modules over rings of characteristic $p$}

We consider the case when $A$ is the ring of integers of a finite extension of $\Q_p$ and $R=K$ is a field of characteristic $p$ (e.g., $A=\Z_p$ and $R=\overline{\F}_p((t))$\ ). Let $\pi$ be a uniformizer for $A$ and set $q=\#(A/\pi A)$.

\begin{definition}[Height]
Let $\mathcal{F}$ and $\mathcal{G}$ be formal $A$-modules over $K$ and let $\phi\in \Hom_R(\mathcal{F},\mathcal{G})$. The height of $\phi$, denoted $\Ht(\phi)$, is the largest integer $h$ such that
\[
\phi(T)=f(T^{q^h})
\] 
for some power series $f\in K[[T]]$. If there is no such $h$, then set $\Ht(\phi)=\infty$.

The \emph{height} of of a formal $A$-module $\mathcal{F}$ over $K$, denoted $\Ht(\mathcal{F})$, is $\Ht([\pi]_\mathcal{F})$.
\end{definition}

\begin{remark}
Though we won't consider formal group height in this article, we remark that it differs from formal module height by a factor of $[A:\Z_p]$. 
\end{remark}

\begin{proposition}
Let $f:\mathcal{F}\to \mathcal{G}$ and $g:\mathcal{G}\to \mathcal{H}$ be homomorphisms of formal $A$-modules defined over $R$. Then $\Ht(g\circ f)=\Ht(f)+\Ht(g)$.
\end{proposition}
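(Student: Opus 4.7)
The plan is to introduce the invariant $\deg(\phi):=$ the smallest degree of a nonzero term of $\phi$ (with $\deg(0)=\infty$), prove the key lemma $\deg(\phi)=q^{\Ht(\phi)}$ for every nonzero formal $A$-module homomorphism $\phi$, and then conclude additivity of heights at once from multiplicativity of degrees under composition.

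If $f$ or $g$ is zero, then $g\circ f=0$ and both sides of the identity equal $\infty$, so assume both are nonzero. Because $K$ is a domain, the leading term of $g\circ f$ is $g_{\deg(g)}\,f_{\deg(f)}^{\deg(g)}\,T^{\deg(g)\deg(f)}$ with nonzero coefficient, giving $\deg(g\circ f)=\deg(g)\deg(f)$. The composition $g\circ f$ is again a formal $A$-module homomorphism, so granting the key lemma,
\[
q^{\Ht(g\circ f)}=\deg(g\circ f)=\deg(g)\deg(f)=q^{\Ht(g)+\Ht(f)},
\]
whence $\Ht(g\circ f)=\Ht(g)+\Ht(f)$.

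To prove the key lemma, let $M=\deg(\phi)$ with leading coefficient $c\in K^\times$. Extracting the coefficient of $X^aY^b$ with $a+b=M$, $1\leq a\leq M-1$, from the formal-group identity $\phi(\mcF(X,Y))=\mcG(\phi(X),\phi(Y))$ forces $c\binom{M}{a}=0$ for all such $a$, so by Lucas' theorem $M$ is a power of $p$. Comparing leading terms of the formal-module identity $\phi\circ[a]_\mcF=[a]_\mcG\circ\phi$ then yields $\bar a^M=\bar a$ in $\F_q$ for every $a\in A$, forcing $(q-1)\mid(M-1)$; combined with $M$ being a $p$-power, this pins $M=q^h$ for some $h\geq 0$. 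An induction on the remaining nonzero degrees --- using Frobenius to see that $\mcF(X,Y)^{q^hs}$ lies in $K[[X^{q^h},Y^{q^h}]]$ and so contributes to no bidegree whose total degree fails to be divisible by $q^h$, together with a repetition of the binomial-Lucas argument --- shows every nonzero degree of $\phi$ is a multiple of $q^h$, hence $\Ht(\phi)=h$ and $\deg(\phi)=M=q^{\Ht(\phi)}$. The main obstacle is this final inductive step, which is delicate to bookkeep but structurally mirrors the base-case Lucas calculation.
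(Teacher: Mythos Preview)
Your argument is correct. The paper itself gives no proof at all: it simply cites \cite[IV.7 Proposition 7.3]{Sil09}, which is the formal \emph{group} statement (height measured in powers of $p$, not $q$). Your route is therefore genuinely different and in fact more complete, because adapting Silverman's statement to the $A$-module height requires exactly your key lemma $\deg(\phi)=q^{\Ht(\phi)}$. The two ingredients you isolate are the right ones: the formal-group identity together with Lucas forces $\deg(\phi)$ to be a $p$-power, and the $A$-linearity identity $\phi\circ[a]_\mcF=[a]_\mcG\circ\phi$ then forces $\bar a^{\deg(\phi)}=\bar a$ for all $\bar a\in\F_q$, whence $(q-1)\mid(\deg(\phi)-1)$; since the multiplicative order of $p$ modulo $q-1$ is exactly $r$ (where $q=p^r$), this pins $\deg(\phi)$ down to a $q$-power.

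Two small remarks on the inductive step. First, your phrase ``contributes to no bidegree whose total degree fails to be divisible by $q^h$'' should really read ``contributes only to bidegrees $(a,b)$ with $q^h\mid a$ and $q^h\mid b$''; this is what Frobenius gives you, and it is what you actually use. Second, the one case your Lucas argument does not immediately kill is when the next offending degree $N$ is itself a $p$-power, since then $(X+Y)^N=X^N+Y^N$ in characteristic $p$. But this case is vacuous: $N>q^h=p^{rh}$ together with $N=p^j$ forces $j>rh$, hence $q^h\mid N$ automatically. It would strengthen the write-up to say this explicitly rather than leaving it inside ``delicate to bookkeep.''
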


\begin{proof}
See \cite[IV.7 Proposition 7.3]{Sil09}.
\end{proof}

\begin{proposition}
\label{prop:height characterization}
If $K$ is separably closed, then all formal $A$-modules of finite height $h$ are isomorphic. Moreover, any such formal $A$-module has endomorphism ring isomorphic to the ring of integers of the central division algebra over $K$ of Hasse invariant $1/h$.
\end{proposition}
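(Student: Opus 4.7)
The plan is to prove both claims by constructing an explicit normal form for formal $A$-modules of height $h$ over $K$ and then computing the endomorphism ring of that normal form directly.

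First I would exhibit a standard formal $A$-module $\mcF_0$ of height $h$. Starting from any Lubin-Tate formal $A$-module over $A$ whose $[\pi]$-series has leading terms $\pi T+T^{q^h}$, reduction modulo $\pi$ kills the linear term and yields a formal $A$-module over $\F_q$ with $[\pi]_{\mcF_0}(T)=T^{q^h}$, which has height $h$ by the definition in the preceding section. Since $K$ is separably closed of characteristic $p$, it contains $\F_{q^h}$, so $\mcF_0$ can be viewed as defined over $K$.

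Second, I would show that every formal $A$-module $\mcF$ of height $h$ over $K$ is isomorphic to $\mcF_0$. The idea is to construct an isomorphism $u(T)=T+a_2T^2+a_3T^3+\cdots\in K[[T]]$ by induction on degree, requiring at each stage that $u$ intertwine both the formal group laws and the $A$-actions modulo $T^{n+1}$. The obstruction to passing from stage $n$ to stage $n+1$ reduces to solving an equation of the form $c^{q^m}-c=\gamma$ for some $m\geq 1$ and some known $\gamma\in K$; since $K$ is separably closed and such polynomials are separable (their derivative is $-1$), a solution exists at each stage. The height hypothesis ensures that the Frobenius powers appearing are nontrivial, keeping the inductive step non-degenerate.

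Third, I would compute $\End(\mcF_0)$. Any $\phi\in \End(\mcF_0)$ must commute with $[\pi]_{\mcF_0}(T)=T^{q^h}$; writing $\phi(T)=\sum c_i T^i$ forces $c_i^{q^h}=c_i$, so $c_i\in \F_{q^h}\subseteq K$, and compatibility with the full $A$-action together with the characteristic-$p$ hypothesis refines this to $\phi(T)=\sum_j c_j T^{q^j}$. Letting $F(T)=T^q$ denote the Frobenius endomorphism, one extracts the relations $F^h=[\pi]_{\mcF_0}$ and $Fc=c^q F$ for $c\in \F_{q^h}$. A rank count over $A$ then identifies $\End(\mcF_0)$ with the standard skew-polynomial-ring presentation of the maximal order in the cyclic central division algebra of invariant $1/h$ over $\Frac(A)$.

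The main obstacle is the inductive construction in the second step: each correction to $u$ must simultaneously preserve the formal group law \emph{and} the entire $A$-action, and compatibility of these constraints is exactly where the height hypothesis (ensuring $[\pi]$ behaves like a pure $q^h$-th power Frobenius) and the separable closedness of $K$ both play essential roles. Identifying the resulting skew-polynomial presentation of $\End(\mcF_0)$ with the cyclic algebra of invariant $1/h$ in the third step is then a standard computation in the theory of central simple algebras over local fields.
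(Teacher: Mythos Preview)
The paper does not give its own proof here: it simply cites \cite[Proposition 1.7]{Dri74}. Your outline is essentially the classical argument that Drinfel'd carries out, so in that sense there is nothing to compare at the level of strategy---you are reconstructing what the citation points to rather than offering an alternative route.

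A couple of remarks on the sketch itself. In step two, the obstruction equations are not literally of Artin--Schreier shape $c^{q^m}-c=\gamma$; what one actually gets from matching the degree-$n$ coefficients in $u\circ[\pi]_{\mcF}=[\pi]_{\mcF_0}\circ u$ is an equation $\alpha c^{q^h}-\beta c=\gamma$ with $\alpha,\beta\in K^\times$ determined by the leading coefficients of the two $[\pi]$-series. This is still separable (derivative $-\beta\neq 0$) and hence solvable over separably closed $K$, so the argument goes through, but the precise form matters if one wants to track where the height hypothesis enters: it guarantees $\alpha\neq 0$, i.e.\ that the equation is genuinely of degree $q^h$ rather than linear. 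You also flag, correctly, that the real work is making the formal-group-law constraint and the $A$-action constraint compatible at each stage; in practice one handles this by first normalizing the $[\pi]$-series alone (which already forces the coefficients of $u$ into a separable tower) and then observing that two formal $A$-module laws with identical $[\pi]$-series must coincide, so no further adjustment is needed.

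Finally, note that the statement in the paper contains a typo: the division algebra is central over $\Frac(A)$, not over $K$ (which has characteristic $p$). Your third step already has this right.
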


\begin{proof}
See \cite[Proposition 1.7]{Dri74}
\end{proof}

\subsection{$A$-typical formal modules}

Set $A[v]:=A[v_1,v_2,\dots]$. Let $f(x)$ be the power series with coefficients in $A[v]\otimes \Frac(A)=\Frac(A)[v_1,v_2,\dots]$ uniquely determined by the functional equation
\[
f(x) = x + \sum_{i=1}^\infty \frac{v_i}{\pi} f^{(q^i)}(x^{q^i})
\]
where $f^{(q^i)}(x)$ denotes the power series obtained from $f(x)$ by replacing each $v_j$ by $v_j^{q^i}$. More explicitly
\[
f(x)=\sum_{i=0}^\infty b_i x^{q^i}
\]
where the coefficients are defined recursively:
\begin{align*}
b_0&= 1\\
b_i &= \frac{b_0v_i +b_1v^q_{i-1}+b_2 v^{q^2}_{i-1}+\cdots + b_{i-1}v_1^{q^{i-1}}}{\pi}
\end{align*}
Induction on $i$ shows that $\pi^i b_i\in A[v]$ for all $i$. 

By Hazewinkel's functional equation lemma \cite[Lemma 4.2]{Haz79}, the power series
\begin{align*}
F_V(x,y):=f^{-1}(f(x)+f(y))\\
[a]_\mathcal{F}(x):= f^{-1}(a f(x))\ \text{ for all } a\in A
\end{align*}
gives rise to a formal $A$-module, which we shall denote $\mathcal{F}_V$. One can show that the multiplication-by-$\pi$ map satisfies the congruence
\begin{align}\label{eq:piAtypical}
[\pi]_\mathcal{F}(x)\equiv v_i x^{q^i} \pmod{\pi,v_1,\dots,v_{i-1}, x^{q^i+1}},
\end{align}
where we are considering $(\pi,v_1,\dots,v_{i-1},x^{q^i+1})$ as an ideal in the ring of power series over $A[v]$.

\begin{definition}[$A$-Typical Formal $A$-Module]
A formal $A$-module over $R$ is called \emph{$A$-typical} if it is the specialization of $\mathcal{F}_V$ with respect to an $A$-algebra homomorphism $\phi:A[v]\to R$. If $\phi(v)$ denotes the sequence $(\phi(v_1), \phi(v_2),\dots)$ in $R$, we shall denote the specialization of $\mathcal{F}_V$ by $\mathcal{F}_{\phi(v)}$. Note that specifying an $A$-algebra homomorphism $A[v]\to R$ is the same as choosing an $r_i\in R$ for each $v_i$.
\end{definition}

\begin{theorem}\cite[Corollary 2.10]{Haz79}
\label{thm:A-typical}
Every formal $A$-module over $R$ is isomorphic to an $A$-typical formal $A$-module over $R$.
\end{theorem}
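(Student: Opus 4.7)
My plan is to follow Hazewinkel's functional-equation approach: given a formal $A$-module $\mcF$ over $R$, I would construct simultaneously an $A$-algebra homomorphism $\phi : A[v] \to R$ (equivalently, a sequence $r_n := \phi(v_n) \in R$) and a formal power series isomorphism $\psi : \mcF \xrightarrow{\sim} \mcF_{\phi(v)}$, both produced inductively degree by degree. The starting point is the observation that both $[\pi]_\mcF$ and $[\pi]_{\mcF_{\phi(v)}}$ have linear term $\pi x$, so we may begin with $\psi(x) = x + O(x^2)$ and $\phi$ undetermined.

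The inductive step proceeds in blocks indexed by $n \geq 1$. Having chosen $r_1,\dots,r_{n-1}$ and a polynomial approximation to $\psi$ so that $\psi^{-1} \circ [\pi]_\mcF \circ \psi$ and $[\pi]_{\mcF_{\phi(v)}}$ agree modulo $x^{q^n}$, I would exploit the key congruence \eqref{eq:piAtypical}, which identifies the $x^{q^n}$-coefficient of $[\pi]_{\mcF_{\phi(v)}}$ as $v_n$ modulo lower data. Comparing coefficients at degree $x^{q^n}$ uniquely determines $r_n$. For intermediate degrees $m$ with $q^{n-1} < m < q^n$ that are not powers of $q$, no new $v$-coordinate is available, so agreement must instead be forced by adjusting $\psi$ by a monomial of degree $m$; this is possible because an $A$-typical formal $A$-module has, at such degrees, coefficients which are determined by lower-degree data, so a change of variable absorbs the discrepancy. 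Passing to the limit, $\psi$ and $\phi$ are defined as formal power series and an $A$-algebra homomorphism respectively, and by construction $\psi$ conjugates $[\pi]_\mcF$ to $[\pi]_{\mcF_{\phi(v)}}$ exactly.

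The main obstacle will be verifying that the inductively constructed $\psi$ is an isomorphism of formal $A$-modules, not merely a conjugation intertwining the $[\pi]$-operations or the formal group laws. This rests on two nontrivial points: first, that the functional equation lemma provides meaning to the "logarithm" of $\mcF_{\phi(v)}$ even over the characteristic-$p$ ring $R$ (via denominator-tracking in $A[v] \otimes_A \Frac(A)$, compatible with integrality of the resulting formal law); and second, that once $\psi$ intertwines addition and the $[\pi]$-action, it automatically intertwines $[a]$ for every $a \in A$, because $A$ is topologically generated over $\Z_p$ by $\pi$ together with the Teichmüller representatives of $A/\pi A$, and the latter act semisimply in a way controlled by the $A$-typical form. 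Establishing this last compatibility is exactly the content of the universality of $\mcF_V$ among $A$-typical formal $A$-modules, and it is the step where the full strength of the functional-equation framework is used.
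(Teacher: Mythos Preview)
The paper does not give its own proof of this statement; it simply records it as \cite[Corollary 2.10]{Haz79} and moves on. So there is nothing in the paper to compare your argument against directly.

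That said, your sketch has a genuine gap. Your inductive construction chooses $\psi$ and the $r_n$ so that $\psi^{-1}\circ[\pi]_{\mcF}\circ\psi$ agrees with $[\pi]_{\mcF_{\phi(v)}}$, but nothing in the construction forces $\psi$ to be a homomorphism of formal groups, i.e.\ to satisfy $\psi(\mcF(x,y))=\mcF_{\phi(v)}(\psi(x),\psi(y))$. Conjugating the single endomorphism $[\pi]$ is far weaker: many power series $\psi(x)=x+\cdots$ will carry $[\pi]_{\mcF}$ to \emph{some} power series with the right leading behavior, without that target being the $[\pi]$-map of any formal $A$-module at all, let alone of $\mcF_{\phi(v)}$. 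You acknowledge at the end that you need $\psi$ to intertwine addition, but your induction never imposes that condition; the phrase ``once $\psi$ intertwines addition and the $[\pi]$-action'' is assuming exactly what has not been arranged.

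Hazewinkel's proof avoids this by working with the formal group law itself rather than with $[\pi]$. Over the universal ring (which is $A$-torsion-free) every formal $A$-module has a logarithm, and one performs a change of coordinates to replace this logarithm by one of $A$-typical shape; the functional equation lemma then guarantees integrality of the resulting strict isomorphism. Since this is done universally, it specializes to any $A$-algebra $R$. If you want to repair your approach, you should run your induction on the coefficients of the formal group law (or of the logarithm, after reducing to the torsion-free case), not on those of $[\pi]$.
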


\begin{example}[Honda Formal Modules]
\label{ex:F1/g}
Let $\mathcal{F}_{1/h}$ denote the $A$-typical formal $A$-module over $\F_q\cong A/\pi A$  with respect to the homomorphism $\phi: A[v]\to \F_q$ which sends $v_h$ to $1$ and $v_i$ to $0$ for all $i\neq h$. One can show that $\mathcal{F}_{1/h}$ is of height $h$ and that
\[
[\pi]_{\mathcal{F}_{1/h}}(T)=T^{q^h}.
\]
All endomorphisms of $\mathcal{F}_{1/h}$ are defined over $\F_{q^h}$ and
\[
\End_{\F_{q^h}}(\mathcal{F}_{1/h})=B_{1/h}
\]
where $B_{1/h}$ is the ring of integers of the central division algebra defined over the field $A\otimes_{\Z_p} \Q_p$ with Hasse invariant $1/h$. 

When $h=1$, the formal group underlying $\mathcal{F}_{1/1}$  is the formal multiplicative group $\hat{\mathbb{G}}_m$ given by the formal group law
\[
X+Y+XY=(1+X)(1+Y)-1.
\]
\end{example}

\subsection{From $p$-divisible groups to formal Modules}
\label{sec:p-div to formal}

Let $\mathcal{F}$ be a formal group of height $h$ over $R$. Let $v\geq 1$ be an integer and $I_v$ denote the ideal of $R[[T]]$ generated by the endomorphism $[p^v]_\mathcal{F}(T)\in \End_{R}(\mathcal{F})$. Then, $G_v:=\Spec(R[[T]]/I_v)$ is a finite commutative group scheme of order $p^{vh}$. These $G_v$ combine to determine a connected 1-dimensional $p$-divisible group $G$. One can show that $\mathcal{F}$ and $G$ determine each other. See \cite[Proposition 1]{Tat67}.

Let $\mathcal{F}$ be a formal $A$-module over $R$. Let $G_{1/h}$ be the $p$-divisible group corresponding to the formal $A$-module $\mathcal{F}_{1/h}$ from Example \ref{ex:F1/g}. Note that the special fiber of $\mathcal{F}$ is isomorphic to $\mathcal{F}_{1/s}$ over $k$. Let  $G_{0/1}$ denote the constant \'etale $A$-module $(A\otimes_{\Z_p} \Q_p)/A$. Let $\mathcal{F}_K$ denote the generic fiber of $\mathcal{F}$. Set $g=\Ht(\mathcal{F}_K)$. Let $G_K$ denote the generic fiber of the p-divisible group $G$. Since the generic fiber $\mathcal{F}_K$ is $1$-dimensional, there is a $K^{\sep}$-isogeny 
\[
G_K \to G_{1/g} \times (G_{0/1})^d,
\]
where $d=s-g$.

Define the slope $1/g$-Tate module
\[
T^{1/g}(\mathcal{F}):=\Hom_{K^{\sep}}(\mathcal{F}_{1/g}, \mathcal{F}_K),
\]
which is of rank 1 over $\End(\mathcal{F}_{1/g})=B_{1/g}$. Define the slope $0$-Tate module 
\[
T^{0/1}(G):=\Hom_{K^{\sep}}(G_{0/1},G_K),
\]
which is of rank $d$ over $A$. These give rise to Galois representations
\[
\rho^{1/g}: \mathfrak{G}_K\to B_{1/g}^\times
\]
and
\[
\rho^{0/1}: \mathfrak{G}_K \to \GL_d(A)
\]
corresponding to the $\rho^\lambda$ in Section \ref{sec:p-divisible groups}.

\section{Towers Arising From $\rho^{0/1}$}
\label{sec:0/1-towers}

Let $\mathcal{F}$ be a 1-dimensional formal $A$-module over $R$ with special fiber of height $s$, and generic fiber of height $g$ with $1\leq g < s$. Set $d=s-g$.
Let $G$ be the corresponding 1-dimensional $p$-divisible group over $R$ (see \S \ref{sec:p-div to formal}).
 In this section we study $p$-adic Lie towers of local fields associated to the Galois representation $\rho^{0/1}$.

\subsection{The torsion tower and its subtowers}

The representation $\rho^{0/1}$ arises from the action of the absolute Galois group $\mathfrak{G}_K$ on the Tate module $T(G)=\varprojlim G[\pi^n](K^{\sep})$. As $T(G)$ is a free $A$-module of rank $d$, we may view $\rho^{0/1}$ as a homomorphism
\[
\rho^{0/1}:\mathfrak{G}_K\to \GL_d(A).
\]
Let $M$ denote the ring of $d\times d$ matrices in $A$. Note that $M^\times=\GL_d(A)$. The Lie filtration on $M^\times$,
\[
M^\times\supseteq 1+\pi M\supseteq 1+\pi^2 M\supseteq\cdots
\]
gives rise to a tower of fields as follows: Let $\delta_n$ be the reduction map
\[
\delta_n:M^\times\to \frac{M^\times}{1+\pi^{n+1} M} \cong \left(\frac{M}{\pi^{n+1}M}\right)^\times
\] 
and let $\mathfrak{H}_n\subseteq \mathfrak{G}_K$ be the kernel of $\delta_n\circ \rho^{1/0}$. Letting $K_n\subset K^{\sep}$ denote the fixed field of $\mathfrak{H}_n$ we get a $p$-adic Lie tower of local fields:
\[
K=K_{-1}\subseteq K_0 \subseteq K_1\subseteq K_2\subseteq \cdots \subseteq K_\infty \subseteq K^{\sep}
\]
where $K_\infty$ is the compositum of the $K_n$. We refer to this tower as the \emph{torsion tower}, since the torsion representation $\rho_n$,
\[
\rho_n:\mathfrak{G}_K\to \Aut_{A/\pi^{n+1} A}(G[\pi^{n+1}](K^{\sep}))\cong \left(\frac{M}{\pi^{n+1}M}\right)^\times
\]
is canonically identified with $\delta_n\circ \rho^{0/1}$, compatibly with change in $n$. In particular, the $K_n$ are the $\pi^{n+1}$-division fields of $G(K^{\sep})$.

For each $y=(y_0,y_1,\dots)\in T(G)=\varprojlim G[\pi^n](K^{\sep})$ we get a tower of local fields
\[
K\subseteq K(y_0)\subseteq K(y_1)\subseteq \cdots \subseteq K^{\sep}.
\]
This tower can be viewed as a \emph{subtower} of the torsion tower, in the sense that $K(y_{n-1})\subseteq K_n$ for all $n$. Additionally, each $K_n$ is the compositum of the $K(y_{n-1})$ as $y$ ranges over all elements of $T(G)$.

\begin{remark}
Katz used properties of the $\{K(y_n)\}_n$ towers in his proof of Igusa's Theorem~\cite[Theorem 4.3]{Kat73}.
\end{remark}

\subsection{Properties of the subtowers}

We study these towers by considering the multiplication-by-$\pi$ map on $G$. By Example \ref{ex:F1/g} and the deformation theory of formal modules \cite[Theorem 22.4.4]{Haz78}, we may replace $\mathcal{F}$ by an isomorphic formal $A$-module, so that the multiplication-by-$\pi$ map, which is a priori a power series, is in-fact a polynomial: 
\[
[\pi]_\mathcal{F}(x)=a_1 x^{q^g}+a_2 x^{q^{g+1}}+\cdots+a_{d} x^{q^{g+d-1}}+x^{q^s}\in R[x]
\]
with each $a_i\in \pi_R R$ and $a_1\neq 0$. Roots of this polynomial correspond to the $\pi$-torsion points of $G(K^{\sep})$. If we insist on these $\pi$-torsion points belonging to $G[\pi](K^{\sep})$ then we may instead consider the separable polynomial $V(x)=[\pi]_\mathcal{F}(x^{1/q^g})$: 
\[
V(x)= a_1 x + a_2 x^q + \cdots + a_d x^{q^{d-1}} + x^{q^d}.
\]

\begin{remark}
 When $g=1$, $V$ corresponds to verschiebung for $G$.
\end{remark}

Twisting the coefficients of $V$ by $q^{ig}$-power Frobenius, we define
\[
V^{(q^{ig})}(x)=a_1^{q^{ig}}x+a_2^{q^{ig}} x^{q}+\cdots+a_{d}^{q^{ig}} x^{q^{d-1}}+x^{q^d}.
\]

Then specifying an element of the Tate module $T(G)=\varprojlim G[\pi^n](K^{\sep})$ amounts to solving the system of equations
\begin{align*}
V(y_0) &=0,\\
V^{(q^g)}(y_1) &= y_0,\\
V^{(q^{2g})}(y_2) &= y_1,\\
V^{(q^{3g})}(y_3) &= y_2,\\
 &\ \vdots
\end{align*}
If $y_0\neq 0$ then each $y_n$ will be a point of order $\pi^{n+1}$.

\begin{remark}
One can arrive at the polynomial given for $[\pi]_\mathcal{F}(x)$ above by more elementary means, avoiding deformation theory of formal modules. To see this, choose a model for $\mathcal{F}$ such that the multiplication-by-$\pi$ is
\[
[\pi]_\mathcal{F}(x)=a_1x^{q^g}+a_2x^{2q^g}+\cdots
\]
with $a_1\neq 0$ and $a_i\in \pi_R R$ for all $i\neq q^d$. Applying the $p$-adic Weierstrass preparation theorem to the power series $[\pi]_\mathcal{F}(x)$ gives a polynomial coinciding with the polynomial given for $[\pi]_\mathcal{F}(x)$ above. Though this method does not show $[\pi]_\mathcal{F}(x)$ is a polynomial, it does give a polynomial whose roots coincide with the roots of $[\pi]_\mathcal{F}(x)$ in $\{x\in K^{\sep}: v_K(x)>0\}$. This idea was used in the proof of \cite[Lemma 4.14]{Gro79}. 
\end{remark}

The following lemma generalizes \cite[Theorem 3.3]{Cha00}:

\begin{proposition}
\label{lem:chaiNP}
There exists a positive integer $m$, depending only on the valuations $v_K(a_j)$, such that for each $y=(y_0,y_1,\dots)\in T(G)$ with $y_0\neq 0$, $v_K(y_n)=q^{-d}\cdot v_K(y_{n-1})$ for all $n\geq m$. Additionally $K(y_n)/K(y_{n-1})$ is a totally ramified extension of degree $q^{d}$ for each $n\geq m$.
\end{proposition}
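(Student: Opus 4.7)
The plan is to study the polynomial
\[
f_n(x) := V^{(q^{ng})}(x) - y_{n-1} = x^{q^d} + \sum_{i=0}^{d-1} a_{i+1}^{q^{ng}} x^{q^i} - y_{n-1} \in K(y_{n-1})[x],
\]
whose root $y_n$ generates $K(y_n)/K(y_{n-1})$, and to extract both the valuation of $y_n$ and the step-by-step ramification behavior from two Newton polygon analyses. The key observation is that the Frobenius twist $a_i \mapsto a_i^{q^{ng}}$ forces the interior coefficients of $f_n$ to have valuations dwarfing $v_K(y_{n-1})$ once $n$ is large, which degenerates both relevant Newton polygons into a clean, uniform shape.

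First I would examine $\mfN(f_n)$ using the unique extension of $v_K$ to $\overline{K}$. Its candidate vertices are $(0, v_K(y_{n-1}))$, $(q^i, q^{ng} v_K(a_{i+1}))$ for $0 \le i \le d-1$, and $(q^d, 0)$. Setting $\mu := \min_j v_K(a_j) > 0$ (possible since $a_j \in \pi_R R$) and bounding $v_K(y_0)$ uniformly through the slopes of $\mfN(V)$ (hence only in terms of the $v_K(a_j)$), one checks that the interior vertices sit strictly above the segment from $(0, v_K(y_{n-1}))$ to $(q^d, 0)$ once $n \ge m$, with $m$ explicit in $\mu$ and the Newton polygon data of $V$ alone. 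By Proposition \ref{prop:NP}, all $q^d$ roots of $f_n$ then share the valuation $v_K(y_{n-1})/q^d$, giving $v_K(y_n) = q^{-d} v_K(y_{n-1})$. Iterating and comparing denominators of valuations forces $e(K(y_n)/K(y_{n-1})) \ge q^d$; combined with $[K(y_n):K(y_{n-1})] \le \deg f_n = q^d$, we conclude that $f_n$ is irreducible over $K(y_{n-1})$ and $K(y_n)/K(y_{n-1})$ is totally ramified of degree $q^d$.

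To pin down the ramification filtration, I would invoke Lemma \ref{lem:Tate}. Write $L := K(y_n)$, $\alpha := y_n$, and $\pi_L$ for a uniformizer of $L$. The crucial simplification is that $V^{(q^{ng})}$ is additive in characteristic $p$, which collapses the translate:
\[
f_n(\pi_L x + \alpha) = V^{(q^{ng})}(\pi_L x) + V^{(q^{ng})}(\alpha) - y_{n-1} = V^{(q^{ng})}(\pi_L x).
\]
Consequently the ramification polynomial is
\[
g(x) = \pi_L^{-q^d} V^{(q^{ng})}(\pi_L x) = \sum_{i=0}^{d-1} a_{i+1}^{q^{ng}} \pi_L^{q^i - q^d} x^{q^i} + x^{q^d},
\]
and for $n \ge m$ the same dominance phenomenon forces $\mfN(g)$ to have a single non-trivial side running to $(q^d, 0)$. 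Corollary \ref{cor:Tate} then identifies the unique upper ramification break explicitly in terms of the $v_K(a_j)$, giving the ramification structure asserted in the proposition.

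The main obstacle is arranging that $m$ depends only on the $v_K(a_j)$, uniformly over all $y \in T(G)$ with $y_0 \ne 0$. Naively, the threshold for the Newton polygon dominance argument depends on $v_K(y_0)$; resolving this requires noting that $v_K(y_0)$ ranges over the finite set of root valuations of $V$, which are read directly off $\mfN(V)$ and are therefore controlled by the $v_K(a_j)$ alone. After this reduction, the analysis is careful but essentially mechanical bookkeeping among $v_K$, $v_{K(y_{n-1})}$, and $v_L$, streamlined by working throughout with the single extension of $v_K$ to $\overline{K}$.
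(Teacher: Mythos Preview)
Your core argument is essentially the paper's: analyze the Newton polygon of $V^{(q^{ng})}(x)-y_{n-1}$, use the uniform bound $v_K(y_0)\le v_K(a_1)$ coming from $\mfN(V)$ to get an $m$ depending only on the $v_K(a_j)$, conclude $v_K(y_n)=q^{-d}v_K(y_{n-1})$ for $n\ge m$, and then compare $e(K(y_n)/K(y_{n-1}))$ with $[K(y_n):K(y_{n-1})]\le q^d$.

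The third paragraph, where you invoke Lemma~\ref{lem:Tate} on the ramification polynomial, is not part of the paper's proof and is not needed for the proposition: the word ``tamely'' in the statement is a slip for ``totally'' (a $q^d$-degree totally ramified extension in residue characteristic $p$ is wildly ramified, so a literal reading is impossible), and you have already obtained ``totally ramified of degree $q^d$'' at the end of your second paragraph. Your additive shortcut $f_n(\pi_L x+\alpha)=V^{(q^{ng})}(\pi_L x)$ is correct and gives finer information than the proposition claims, but note that $K(y_n)/K(y_{n-1})$ need not be Galois, so if you keep this paragraph you should phrase the conclusion via the general form of Tate's Lemma rather than Corollary~\ref{cor:Tate}(\ref{cor:Tate2}).
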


\begin{proof}
We have that $V(y_0)=0$ and $V^{(q^{ig})}(y_i)=y_{i-1}$ for all $i>0$. As each $V^{(q^{ig})}$ is of degree $q^d$ we have that $q^d v_K(y_{i+1})\leq v_K(y_i)$ for all $i\geq 0$. Also for each $i$, $y_i$ is a root of the degree $q^{id}$ monic polynomial $V\circ V^{(q^g)}\circ \cdots\circ V^{(q^{ig})}$, and therefore the denominator of $v_K(y_i)$ is at most $q^{id}$ when written in lowest terms.

Consider the Newton polygon of $V(x)$. Note that $(1, v_K(a_1))$ is the point of highest $y$-value on the (lower) boundary of $\mathfrak{N}(V(x))$. It then follows from Proposition \ref{prop:NP} that the valuation of any of the roots of $V(x)$ is at most $v_K(a_1)$. In particular $v_K(y_0)\leq v_K(a_1)$. Since $q^d v_K(y_{i+1})\leq v_K(y_{i})$ for all $i\geq 0$ it follows that $q^{id}v_K(y_i)\leq v_K(a_1)$ for all $i$. In particular, the numerator of $v_K(y_i)$, when written in lowest terms, is at most $v_K(a_1)$.

Now we consider the Newton polygons of the polynomials $V^{(q^{ig})}(x)-y_{i-1}$. We have just seen that $v_K(y_i)\leq v_K(a_1)$ for all $i$. On the other hand, for each $j$, $v_K(a_j^{q^{ig}})=q^{ig} v_K(a_j)\geq q^{ig}$ tends to infinity as $i\to\infty$. From this we see that there exists an $m'$, depending only on the valuations of the $a_j$, such that for all $i\geq m'$ the boundary of $\mathfrak{N}(V^{(q^{ig})}(x)-y_{i-1})$ is the line segment between $(0, v_K(y_{i-1}))$ and $(q^{d},0)$.  It follows from Proposition \ref{prop:NP} that $v_K(y_i)=q^{-d}\cdot v_K(y_{i-1})$ for all $i\geq m'$. 

As the numerator of $v_K(y_i)$ is at most $v_K(a_1)$ for all $i$, the largest power of $q^d$ that can divide the numerator of $v_K(y_i)$ is less than or equal to $\lceil \log_{q^d}(v_K(a_1))\rceil$. Set $m:=m'+\lceil \log_{q^d}(v_K(a_1))\rceil+1$. By the previous paragraph, for $n\geq m$ the denominator of $v_K(y_n)$ is exactly $q^d$ times greater than the denominator of $v_K(y_{n-1})$. This implies that if $n\geq m$ then the ramification index $e(K(y_n)/K(y_{n-1}))$ is at least $q^d$. On the other hand, since $\deg(V^{(q^{ig})}(x)-y_{i-1})=q^d$, the index $[K(y_i):K(y_{i-1})]$ is at most $q^{d}$. Since $e(K(y_i)/K(y_{i-1}))$ divides $[K(y_i):K(y_{i-1})]$, it follows that
\[
e(K(y_n)/K(y_{n-1}))=[K(y_n):K(y_{n-1})]=q^{d}
\]
for all $n\geq m$. 
\end{proof}

The following irreducibility theorem generalizes \cite[Theorem 3.5]{Cha00}:

\begin{theorem}
\label{thm:irreducible}
Every non-zero orbit of the action of the monodromy group $\rho^{0/1}(\mathfrak{G}_K)$ on the Tate module $T(G)$ is open. In particular, the representation $\rho^{0/1}$ is irreducible.
\end{theorem}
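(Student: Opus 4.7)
The plan is to show that every nonzero $\mfG_K$-orbit in $T(G)$ equals the preimage, under the natural projection $T(G) \twoheadrightarrow G[\pi^m](K^{sep})$, of a finite Galois orbit, and hence is a finite disjoint union of cosets of the open subgroup $\pi^m T(G)$. I would first reduce to the case $y_0 \neq 0$. Any nonzero $y \in T(G)$ can be written as $y = \pi^k y'$ with $y'_0 \neq 0$, since $T(G)$ is a torsion-free $A$-module, and multiplication by $\pi^k$ carries a finite disjoint union of cosets of $\pi^m T(G)$ to a finite disjoint union of cosets of $\pi^{k+m} T(G)$ (disjointness again from torsion-freeness), so openness of the orbit of $y'$ implies openness of the orbit of $y$.

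For the core case $y_0 \neq 0$, Proposition \ref{lem:chaiNP} supplies an integer $m$, depending only on the valuations $v_K(a_j)$, such that $[K(y_n):K(y_{n-1})] = q^d$ for all $n \geq m$. Since the separable polynomial $V^{(q^{ng})}(x) - y_{n-1}$ is monic of degree $q^d$ with root $y_n$, it must coincide with the minimal polynomial of $y_n$ over $K(y_{n-1})$ and is therefore irreducible. Its $q^d$ roots are exactly the lifts of $y_{n-1}$ in $G[\pi^{n+1}](K^{sep})$, so $\Gal(K^{sep}/K(y_{n-1}))$ acts transitively on them. The same conclusion applies, with the same $m$, to any $z \in T(G)$ with $z_0 \neq 0$, since the hypothesis of the Proposition depends only on $[\pi]_\mcF$.

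Let $P$ denote the preimage of the Galois orbit of $y_{m-1}$ under the projection $T(G) \to G[\pi^m](K^{sep})$, $(y_n) \mapsto y_{m-1}$. The inclusion $\mfG_K \cdot y \subseteq P$ is immediate. For the reverse, given $z \in P$ I would apply a Galois element carrying $z_{m-1}$ back to $y_{m-1}$, reducing to the case $z_{m-1} = y_{m-1}$, which also forces $z_i = y_i$ for all $i < m$. I would then show inductively that the sets $S_N = \{\sigma \in \Gal(K^{sep}/K(y_{m-1})) : \sigma(y_n) = z_n \text{ for all } n \leq N\}$ are nonempty for every $N \geq m-1$: if $\sigma \in S_N$, then $\sigma(y_{N+1})$ and $z_{N+1}$ are both lifts of $z_N$, so transitivity supplies $\tau \in \Gal(K^{sep}/K(z_N))$ with $\tau(\sigma(y_{N+1})) = z_{N+1}$, and $\tau\sigma \in S_{N+1}$. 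The $S_N$ form a nested chain of nonempty closed subsets of the compact space $\mfG_K$, so $\bigcap_N S_N$ is nonempty by compactness, yielding $\sigma$ with $\sigma(y) = z$.

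Since $P$ is then a finite disjoint union of cosets of the open subgroup $\pi^m T(G)$, it is open in $T(G)$, proving the main claim. Irreducibility follows at once: a $\mfG_K$-stable proper $\Frac(A)$-subspace of $T(G) \otimes_A \Frac(A)$ would meet $T(G)$ in an $A$-submodule of rank strictly less than $d$, and such a submodule has empty interior (any subgroup of $T(G) \cong A^d$ with nonempty interior contains some $\pi^N T(G)$ and so has full rank $d$), so it cannot contain the open orbit of any of its nonzero elements. The main obstacle is the identification $\mfG_K \cdot y = P$: one must marry the inductive transitivity argument at each level with the compactness of $\mfG_K$ to splice finite matchings into a single $\sigma \in \mfG_K$ matching the entire infinite sequence $z$.
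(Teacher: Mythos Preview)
Your proof is correct and, in the core transitivity step, parallels the paper closely: both arguments hinge on the observation, drawn from Proposition~\ref{lem:chaiNP}, that for $n\geq m$ the polynomial $V^{(q^{ng})}(x)-y_{n-1}$ is irreducible over $K(y_{n-1})$, so $\Gal(K^{sep}/K(y_{n-1}))$ permutes the $q^d$ lifts of $y_{n-1}$ transitively.

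Where you diverge is in how you pass from level-by-level transitivity to openness of the whole orbit. The paper invokes the $p$-adic analytic structure: it observes that an orbit $Z$ is a $p$-adic analytic subvariety, proves the ``filling'' identity $((z+\pi^n T(G))\cap Z)+\pi^{n+1}T(G)=z+\pi^n T(G)$ for $z\in Z$ and $n>N$, and concludes $\dim Z=\dim T(G)$, hence $Z$ is open. You instead identify the orbit \emph{exactly} as the preimage $P$ of a finite Galois orbit in $G[\pi^m](K^{sep})$, by splicing the level-wise transitivity into a single $\sigma\in\mfG_K$ via the finite-intersection property of the compact group $\mfG_K$. Your route is more elementary---it avoids any appeal to the theory of $p$-adic analytic varieties---and yields a sharper conclusion (an explicit description of the orbit as a union of $\pi^m T(G)$-cosets), at the cost of the short compactness argument. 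You also handle the reduction from arbitrary nonzero $y$ to the case $y_0\neq 0$ explicitly, which the paper leaves implicit; your observation that $\pi^k$ carries $\pi^m T(G)$-cosets to $\pi^{k+m}T(G)$-cosets is the cleanest way to do this.
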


\begin{proof}
Let $Z$ be a non-zero $\rho^{0/1}(\mathfrak{G}_K)$-orbit of $T(G)$. Note that the subset
\[
 U:=T(G)-\pi T(G)\subset T(G)
\]
is stable under the linear action of $\rho^{0/1}(\mathfrak{G}_K)$. It follows that the subsets $\pi^i U$ are $\rho^{0/1}(\mathfrak{G}_K)$-stable for all $i\geq 1$. Let $m\in\Z_{\geq 1}$ be such that $Z\subseteq \pi^m U$.

Let $(y_i)$ be a sequence of elements in $K^{\sep}$ satisfying the conditions in Lemma \ref{lem:chaiNP}. By Lemma \ref{lem:chaiNP} the orbit of $y_n$ under $\Gal(K^{\sep}/K(y_{n-1}))$ has $q^{d}$ elements for each $n\geq N$. As $y_0\neq 0$ and each $y_{i-1}$ has order $\pi^i$, the sequence $(y_i)$ corresponds to an element $y$ of $U$. The coset 
\begin{align}\label{eq:orbit}
\left(\Gal(K^{\sep}/K(y_{n-1}))\cdot y + \pi^{n+1} T(G)\right)\big/ \pi^{n+1} T(G)
\end{align}
is in bijection with the orbit of $y_n$ under $\Gal(K^{\sep}/K(y_{n-1}))$, and thus contains $q^{d}$ elements.

We claim that for each $z=(z_i)\in Z$,
\begin{align}\label{eq:orbit2}
((z+\pi^{m+n} T(G))\cap Z) + \pi^{m+n+1}T(G) = z+\pi^{m+n}T(G)
\end{align}
for all $n>N$. The left hand side corresponds to $u=(u_i)\in \pi^m U$ such that there exists a $z'=(z'_i)\in Z$ with $u_i=z'_i$ for all $i\leq m+n$ and $z'_i=z_i$ for all $i<m+n$. The right hand side corresponds to all elements $u=(u_i)$ of $\pi^m U$ for which $z_i=u_i$ for all $i<m+n$. In particular, the left hand side of (\ref{eq:orbit2}) is contained in the right hand side. Therefore, to prove equality it suffices to show that the two sets have the same cardinality modulo $\pi^{m+n+1}T(G)$. The left hand side modulo $\pi^{m+n+1}T(G)$ has cardinality $q^d$, since it follows from (\ref{eq:orbit}) that there are $q^d$ possibilities for $z'_{m+n}$ such that $(z'_0,\dots,z'_{m+n})=(z_0,\dots,z_{m+n-1},z'_{m+n})$. The right hand side modulo $\pi^{m+n+1}T(G)$ also has cardinality $q^d$, because each of its elements can be written as $(z_0,\dots,z_{m+n-1},t)$ with $t\in G[\pi^{m+n}]^{et}$, and there are precisely $q^d$ possibilities for $t\in G[\pi^{m+n}]^{et}$ since $T(G)$ is a free $A$-module of rank $d$.

The equality (\ref{eq:orbit2}) implies that if $u\in \pi^m U$ and there exists a $z\in Z$ for which $z\equiv u \pmod{\pi^{m+n+1}T(G)}$ for any $n\geq N$, then $u\in Z$. In particular, $Z$ contains a translate of $\pi^{m+N+1} T(G)$. Thus $Z$ has dimension $\dim(Z)=\dim(T(G))$ as a $p$-adic analytic subvariety of $T(G)$, i.e., $Z$ is open in $T(G)$.
\end{proof}

Since $T(G)$ is a finite free $A$-module, $T(G)-\pi T(G)$ is compact. From this observation we obtain the following corollary:

\begin{corollary}
\label{cor:compact}
There are only finitely many $\rho^{0/1}(\mathfrak{G}_K)$-orbits in $T(G)-\pi T(G)$.
\end{corollary}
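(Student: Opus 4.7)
The plan is to deduce this from Theorem \ref{thm:irreducible} together with a compactness argument. The key point is that $T(G) \setminus \pi T(G)$ is a compact topological space, and it has been partitioned into pairwise disjoint open subsets by the orbits of $\rho^{0/1}(\mfG_K)$; any such partition of a compact space must be finite.

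More precisely, I would first observe that since $A$ is the ring of integers in a finite extension of $\Q_p$, it is a profinite ring, hence compact, and therefore so is the free $A$-module $T(G) \cong A^d$. The subset $\pi T(G) \cong (\pi A)^d$ is open in $T(G)$ (it is a basic neighborhood of $0$), so its complement $U := T(G) \setminus \pi T(G)$ is closed in $T(G)$ and hence compact.

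Next, by Theorem \ref{thm:irreducible}, each orbit of $\rho^{0/1}(\mfG_K)$ acting on $U$ is open in $T(G)$, and so certainly open in $U$. Since the orbits partition $U$, each orbit is also the complement in $U$ of the union of the remaining orbits, so each orbit is closed in $U$. Thus $U$ is expressed as a disjoint union of clopen subsets; by the compactness of $U$, only finitely many of these can be nonempty, giving the finiteness claim.

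I do not anticipate any real obstacle here: the only substantive input is Theorem \ref{thm:irreducible}, and the rest is point-set topology. The one subtlety to keep track of is simply that in characteristic $p$ the Tate module is a free $A$-module of rank $d = s - g$ (not of rank $s$), but this does not affect the compactness of $T(G)$ or the argument.
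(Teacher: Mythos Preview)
Your proposal is correct and follows essentially the same approach as the paper: show that $T(G)\setminus \pi T(G)$ is compact and then use Theorem \ref{thm:irreducible} to conclude that its partition into open orbits must be finite. Your justification of compactness (closed subset of the compact space $A^d$) is in fact cleaner than the paper's, which passes through the identification $A^d\setminus(\pi A)^d\cong (A^\ast)^d$.
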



\section{Towers Arising From $\rho^{1/g}$}
\label{sec:1/gTowers}

In this section we introduce and study certain $p$-adic Lie towers of local fields associated to the Galois representation $\rho^{1/g}$. 

\subsection{The $1/g$-tower}

Recall that the the $p$-divisible group $G$ gives rise to a continuous homomorphism
\[
\rho^{1/g}: \mathfrak{G}_K\to B_{1/g}^\times,
\]
where $B_{1/g}$ is the ring of integers of the division algebra over the field $A\otimes_{\Z_p} \Q_p$ with Hasse invariant $1/g$. Let $\pi_{1/g}$ be a uniformizer for $B_{1/g}$. Then there is a $\pi_{1/g}$-filtration of $B_{1/g}^\times$
\[
B_{1/g}^\times \supset 1+\pi_{1/g} B_{1/g} \supset 1+\pi_{1/g}^2 B_{1/g} \supset \cdots.
\]

This filtration gives rise to a $p$-adic Lie tower of local fields as follows: We appropriate our notation from earlier, letting $\delta_n$ be the reduction map
\[
\delta_n:B_{1/g}^\times \to \frac{B_{1/g}^\times}{1+\pi_{1/g}^{n+1} B_{1/g}}\cong \left(\frac{B_{1/g}}{\pi_{1/g}^{n+1} B_{1/g}}\right)^\times,
\]
and $\mathfrak{H}_n\subseteq \mathfrak{G}_K$ be the kernel of the composition $\rho_n:=\delta_n\circ\rho_{1/g}$. Denoting the fixed field of $\mathfrak{H}_n$ by $K_n\subseteq K^{\sep}$ we obtain a tower of fields,
\[
K=K_{-1}\subseteq K_0 \subseteq K_1 \subseteq K_2 \subseteq \cdots \subseteq K_\infty \subseteq K^{\sep},
\]
where $K_\infty$ is the compositum of the $K_n$ and where $\Gal(K_n/K)\cong \mathfrak{G}_K/\mathfrak{H}_n$. We refer to this tower as the \emph{$1/g$-tower of $G$}. 

\subsection{Properties of the $1/g$-tower} 

In analogy to the torsion tower, we consider subtowers which arise from elements of the generalized Tate module $T^{1/g}(\mathcal{F})=\Hom_{K^{\sep}}(\mathcal{F}_{1/g},\mathcal{F}_{K})$. We will focus on invertible elements of $T^{1/g}(\mathcal{F})$, i.e., elements corresponding to $K^{\sep}$-isomorphisms. Fix a $K^{\sep}$-isomorphism $\varphi:\mathcal{F}_{1/g}\to \mathcal{F}$. We write the inverse as
\[
\varphi^{-1}(T)=c_1T+c_2T^2+\cdots\in K^{\sep}[[T]]
\]

For $\sigma\in \mathfrak{G}_K$ let $\varphi^{\sigma}$ denote the morphism of formal $A$-modules obtained by the action of $\sigma$ on the coefficients of the power-series for $\varphi$. Noting that $\mathfrak{G}_K$ acts trivially on the coefficients of $\mathcal{F}_{1/g}$, we can write $\rho^{1/g}$ in terms of $\varphi$ as follows:
\begin{align*}
\rho^{1/g}:\mathfrak{G}_K &\to \Aut(\mathcal{F}_{1/g})\cong B_{1/g}^\times\\
\sigma&\mapsto \varphi^{-1}\circ\varphi^{\sigma}.
\end{align*}

Recall that 
\[
[\pi]_\mathcal{F}(x)=a_1 x^{q^g}+a_2 x^{q^{g+1}}+\cdots+a_dx^{q^{g+d-1}}+x^{q^s}\in R[x]
\]
and
\[
[\pi]_{\mathcal{F}_{1/g}}(x)=x^{q^g}.
\]
Since $\varphi^{-1}$ is an isomorphism of formal modules from $\mathcal{F}$ to $\mathcal{F}_{1/g}$,
 \begin{align}\label{eq:FormalID}
\varphi^{-1} \circ [\pi]_{\mathcal{F}}(x)=[\pi]_{\mathcal{F}_{1/g}}\circ \varphi^{-1}(x)=c_1^{q^g}x^{q^g}+c_2^{q^g}x^{2q^g}+\cdots.
\end{align}
Fix a positive integer $n$. For each positive integer $t$ define the set
\[
S_t:=\left\{(i_1,\dots,i_t)\in \{0,1,\dots,d\}^t : q^{n+g}=\sum_{j=1}^t q^{i_j+g}\right\}.
\]
Set $a_{d+1}:=1$ and 
\[
b:=\sum_{t=1}^{q^n-1} c_t \sum_{(i_1,\dots,i_t)\in S_t} \prod_{j=1}^t a_{i_j+1}.
\]
Then, comparing coefficients of $x^{q^{n+g}}$ in (\ref{eq:FormalID}), we have that
\begin{align}\label{eq:FormalIDCOEF}
a_1^{q^n}c_{q^n}+b=c_{q^n}^{q^g}.
\end{align}

We will now study the tower
\[
K\subseteq K(c_1) \subseteq K(c_1, c_2) \subseteq K(c_1,c_2,c_3)\subseteq \cdots\subseteq K^{\sep}.
\]
Gross proved the following results about this tower:

\begin{proposition}{\cite[Lemma 4.2(1)]{Gro79}}
\label{prop:varphi property1}
The coefficients $c_i$ of $\varphi$ are integral in $K^{\sep}$. 
\end{proposition}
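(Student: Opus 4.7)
The plan is to proceed by induction on $i$, exploiting the functional equation (\ref{eq:FormalID}) together with the fact that $[\pi]_{\mcF_{1/g}}(x) = x^{q^g}$. Since $K^{sep}$ has characteristic $p$ and $q^g$ is a power of $p$, the right-hand side of (\ref{eq:FormalID}) already provides the clean expansion $\varphi^{-1}(x)^{q^g} = \sum_{k \geq 1} c_k^{q^g} x^{k q^g}$. To expand the left-hand side, I would factor $[\pi]_\mcF(x) = x^{q^g} P(x)$ with $P(x) = a_1 + a_2 x^{q^{g+1} - q^g} + \cdots + x^{q^s - q^g} \in R[x]$, so that $\varphi^{-1}([\pi]_\mcF(x)) = \sum_{j \geq 1} c_j x^{j q^g} P(x)^j$. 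Matching the coefficient of $x^{k q^g}$ on both sides then yields the recursion
\[
c_k^{q^g} - a_1^k c_k = \sum_{j=1}^{k-1} c_j \, p_{k,j},
\]
where $p_{k,j}$ denotes the coefficient of $x^{(k-j) q^g}$ in $P(x)^j$ and hence lies in $R$.

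The base case $k = 1$ collapses to $c_1^{q^g} = c_1 a_1$; since $\varphi^{-1}$ is an isomorphism we have $c_1 \neq 0$, so $v_K(c_1) = v_K(a_1)/(q^g - 1) > 0$. For the inductive step, assume $c_1, \ldots, c_{k-1}$ are integral, so the right-hand side of the recursion is integral. If one supposes $v_K(c_k) < 0$, then because $v_K(a_1) \geq 1$ one has $q^g v_K(c_k) < v_K(c_k) \leq v_K(a_1^k c_k)$, and the strict ultrametric inequality forces $v_K(c_k^{q^g} - a_1^k c_k) = q^g v_K(c_k) < 0$, contradicting the integrality of the right-hand side. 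Hence $v_K(c_k) \geq 0$.

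The only real obstacle is the bookkeeping needed to set up the recursion cleanly; once that is in place, the induction itself is a routine ultrametric argument driven by the characteristic-$p$ identity $[\pi]_{\mcF_{1/g}}(x) = x^{q^g}$, which isolates $c_k^{q^g}$ as the dominant contribution to the coefficient of $x^{k q^g}$.
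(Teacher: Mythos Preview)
Your argument is correct. The paper itself does not supply a proof of this proposition; it merely cites \cite[Lemma~4.2(1)]{Gro79}. Your induction via the coefficient comparison in the identity~(\ref{eq:FormalID}) is exactly the natural route (and is essentially Gross's argument): the Frobenius-linearity of $[\pi]_{\mcF_{1/g}}(x)=x^{q^g}$ in characteristic $p$ isolates $c_k^{q^g}$ on the right, while on the left the lowest-order contribution from $c_k[\pi]_\mcF(x)^k$ is $a_1^k c_k$, and all remaining terms involve only $c_1,\dots,c_{k-1}$ with integral coefficients. The ultrametric contradiction you give for $v_K(c_k)<0$ is clean; note that $v_K(a_1)\geq 1$ is legitimate because $a_1\in\pi_R R\subset K$ and $v_K$ is normalized on $K^\ast$. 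One cosmetic point: the $c_i$ are by definition the coefficients of $\varphi^{-1}$ rather than of $\varphi$ (the paper's phrasing is slightly loose here), and you correctly work with $\varphi^{-1}$ throughout.
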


\begin{proposition}{\cite[Lemma 4.2(2)]{Gro79}}
\label{prop:varphi property2}
 If $j<q^n$ then $c_j\in K_{n-1}$ and $K_{n}=K_{n-1}(c_{q^n})$.
\end{proposition}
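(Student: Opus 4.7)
The plan is to translate membership in $\mfH_n$ into an explicit $T$-adic congruence on $\varphi^{\pm 1}$ and then read off the statements about the $c_j$ directly.

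First I would set up a power-series reformulation of the filtration. The uniformizer $\pi_{1/g}$ of $B_{1/g}$ can be chosen so that $[\pi_{1/g}]_{\mcF_{1/g}}(T)=T^q$; this is consistent with $[\pi]_{\mcF_{1/g}}(T)=T^{q^g}$ and the fact that $B_{1/g}$ has residue degree $g$. With this choice $[\pi_{1/g}^{k}]_{\mcF_{1/g}}(T)=T^{q^{k}}$, and using $\mcF_{1/g}(X,Y)\equiv X+Y\pmod{XY}$ one obtains the dictionary
\[
u\in 1+\pi_{1/g}^{k}B_{1/g}\ \iff\ [u]_{\mcF_{1/g}}(T)\equiv T\pmod{T^{q^{k}}}.
\]
Applying this to $u=\rho^{1/g}(\sigma)=\varphi^{-1}\circ\varphi^{\sigma}$ and composing with $\varphi$ (noting that passage to power-series inverses preserves $T$-adic congruences), one finds
\[
\sigma\in\mfH_n\ \iff\ \varphi^{\sigma}\equiv\varphi\ \pmod{T^{q^{n+1}}}\ \iff\ c_j^{\sigma}=c_j\text{ for all }j<q^{n+1}.
\]
Part (1) of the proposition is then immediate: for $\sigma\in\mfH_{n-1}$ and $j<q^n$ one has $c_j^{\sigma}=c_j$, so $c_j\in K_{n-1}$.

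For the equality $K_n=K_{n-1}(c_{q^n})$, the inclusion $K_{n-1}(c_{q^n})\subseteq K_n$ follows from the same dictionary (with $n$ in place of $n-1$) since $q^n<q^{n+1}$. For the reverse I would prove the contrapositive: any $\sigma\in\mfH_{n-1}\smallsetminus\mfH_n$ moves $c_{q^n}$. Writing $\rho^{1/g}(\sigma)=1+\pi_{1/g}^{n}\alpha$ with $\bar\alpha\in\F_{q^g}^{\times}$, a direct expansion gives
\[
[\rho^{1/g}(\sigma)]_{\mcF_{1/g}}(T)\equiv T+\bar\alpha^{q^{n}}T^{q^{n}}\pmod{T^{q^{n}+1}},
\]
and Taylor-expanding $\varphi^{\sigma}(T)=\varphi\bigl([\rho^{1/g}(\sigma)]_{\mcF_{1/g}}(T)\bigr)$ yields
\[
\varphi^{\sigma}(T)-\varphi(T)\equiv b_1\bar\alpha^{q^{n}}T^{q^{n}}\pmod{T^{q^{n}+1}},
\]
where $b_1$ is the linear coefficient of $\varphi$ and is a unit. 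Inverting, using $b_1c_1=1$, one obtains $c_{q^n}^{\sigma}-c_{q^n}=-(c_1\bar\alpha)^{q^{n}}\neq 0$.

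The main obstacle is the bookkeeping in the last step: one has to verify that the degree-$q^n$ perturbation of $\varphi^{\sigma}$ really does induce a nonzero change in the coefficient $c_{q^n}$ of $\varphi^{-1}$, and is not cancelled by higher-order coefficients of $\varphi$. Once one confirms that only the linear coefficients $b_1$ and $c_1$ contribute to the degree-$q^n$ term (all higher coefficients contribute strictly above $T^{q^n}$), the nonvanishing of $c_{q^n}^{\sigma}-c_{q^n}$ follows directly from $\bar\alpha\neq 0$. As a sanity check, the functional equation $\varphi^{-1}\circ[\pi]_{\mcF}(x)=\sum c_j^{q^g}x^{jq^g}$ from \eqref{eq:FormalID} provides an independent Frobenius-compatibility constraint on the leading Galois differences that can be used to cross-verify the computation.
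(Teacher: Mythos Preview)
The paper does not supply its own proof of this proposition; it is simply quoted from Gross \cite[Lemma 4.2(2)]{Gro79}. So there is nothing in the paper to compare against, and your argument has to be judged on its own merits.

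Your approach is correct and is in the spirit of Gross's original argument. The key dictionary
\[
u\in 1+\pi_{1/g}^{k}B_{1/g}\quad\Longleftrightarrow\quad [u]_{\mcF_{1/g}}(T)\equiv T\pmod{T^{q^{k}}}
\]
is valid: every nonzero endomorphism of $\mcF_{1/g}$ factors uniquely as $f(T^{q^{k}})$ with $f'(0)\neq 0$, and $k$ is exactly the $\pi_{1/g}$-valuation of the corresponding element of $B_{1/g}$ (the choice $[\pi_{1/g}](T)=T^{q}$ is legitimate since $q$-Frobenius is a uniformizer of $\End(\mcF_{1/g})$). Since left and right composition by invertible power series, as well as power-series inversion, preserve congruences modulo $T^{N}$, the equivalence $\sigma\in\mfH_{n}\iff c_{j}^{\sigma}=c_{j}$ for all $j<q^{n+1}$ follows, and part (1) is immediate.

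For part (2), your contrapositive argument is sound. The only quibble is cosmetic: the displayed formula $c_{q^{n}}^{\sigma}-c_{q^{n}}=-(c_{1}\bar\alpha)^{q^{n}}$ is not quite the right constant (the exact coefficient involves a power of $b_{1}$, and whether $\bar\alpha$ or $\bar\alpha^{q^{n}}$ appears depends on the side on which $\pi_{1/g}^{n}$ sits in the non-commutative product). But this is irrelevant to the argument: what matters is that the degree-$q^{n}$ coefficient of $\varphi^{\sigma}-\varphi$ is a nonzero multiple of $b_{1}$, and that inversion transports this to a nonzero change in $c_{q^{n}}$. Your ``bookkeeping'' worry is handled exactly as you anticipate: modulo $T^{q^{n}+1}$, only the linear coefficients $b_{1},c_{1}$ and the degree-$q^{n}$ perturbation contribute, since every other cross-term lands in degree strictly above $q^{n}$.
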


For each $n$ let $v_n$ denote the valuation on $K_n$ obtained by first extending $v_K$ and then re-normalizing so that the value group of $v_n$ on $K_{n-1}^\times$ is $\Z$. We now state our main result:

\begin{theorem}
\label{thm:1/gBreaks}
Suppose the image of $\rho^{1/g}$ is open in $B_{1/g}^\times$ so that there exists an $N$ such that $1+\pi_{1/g}^{n+1}B_{1/g}\subseteq \rho^{1/g}(\mathfrak{G}_K)$ for all $n\geq N$. Define
\[
W(n)=-v_{N}\left(a_1^{-\frac{q^{g+n}}{q^g-1}} b \right)/(q^g-1)-\frac{1}{q^g} \sum\limits_{j=N+1}^{n-1} v_{N}\left(a_1^{-\frac{q^{g+j}}{q^g-1}} b\right)
\]
Then, for $n>N$, $W(n)$ are the upper breaks in the upper ramification filtration of $\rho^{1/g}(\mathfrak{G}_{K_N})$ and
\[
\rho^{1/g}(\mathfrak{G}_{K_N})^{W(n)}=1+\pi_{1/g}^{n}B_{1/g}.
\]
\end{theorem}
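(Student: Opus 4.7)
The plan is to analyze the tower $K_N \subset K_{N+1} \subset \cdots$ step by step by applying Tate's Lemma to each single step, and then to assemble the upper ramification breaks of $\Gal(K_\infty/K_N) \cong 1+\pi_{1/g}^{N+1}B_{1/g}$ via iterated Herbrand transition. The openness hypothesis ensures $\rho^{1/g}(\mfG_{K_n}) = 1+\pi_{1/g}^{n+1}B_{1/g}$ for all $n \geq N$, so each step $K_n/K_{n-1}$ with $n>N$ is a totally ramified Galois extension with group isomorphic to $\F_{q^g}^+$.

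First I would derive the minimal polynomial of $\alpha_n := c_{q^n}$ over $K_{n-1}$. Expanding both sides of the functional equation (\ref{eq:FormalID}) and extracting the coefficient of $x^{q^{g+n}}$ gives
\[
c_{q^n}^{q^g} = a_1^{q^n} c_{q^n} + E_n,
\]
where $E_n$ is an explicit expression in the $c_j$ for $j < q^n$ and the coefficients $a_i$ of $[\pi]_{\mcF}$. By Proposition \ref{prop:varphi property2} all such $c_j$ lie in $K_{n-1}$, so $f_n(T) := T^{q^g} - a_1^{q^n} T - E_n \in K_{n-1}[T]$ has degree $[K_n:K_{n-1}] = q^g$ and is therefore the minimal polynomial of $\alpha_n$.

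Next I would apply Tate's Lemma to $f_n$ to compute the upper ramification break $u_n$ of $K_n/K_{n-1}$. The vanishing $f_n(\alpha_n)=0$ kills the constant term of $f_n(\pi_{K_n}T+\alpha_n)$, and in characteristic $p$ the expansion $(\pi_{K_n}T+\alpha_n)^{q^g}$ collapses to $\pi_{K_n}^{q^g}T^{q^g}+\alpha_n^{q^g}$, so the ramification polynomial $g_n = \pi_{K_n}^{-q^g} f_n(\pi_{K_n}T+\alpha_n)$ reduces to the two-term expression $T^{q^g} - a_1^{q^n}\pi_{K_n}^{1-q^g}T$, whose Newton polygon has a single non-trivial side. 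I would then iterate the Herbrand composition $\phi_{K_{n-1}/K_N} = \phi_{K_{N+1}/K_N}\circ\cdots\circ\phi_{K_{n-1}/K_{n-2}}$, where each factor is piecewise linear with a single slope change from $1$ to $q^{-g}$ at its respective break, and apply the result to $u_n$. By Proposition \ref{prop:ramLie} applied with $\Gamma = \Gal(K_n/K_N)$ and $H = \Gal(K_n/K_{n-1})$, the resulting value is the upper break of $\Gal(K_n/K_N)$ at which $\Gamma^x$ drops out of $H$; after telescoping and algebraic simplification this matches $W(n)$. Passing to the inverse limit then identifies $\rho^{1/g}(\mfG_{K_N})^{W(n)}$ with $1+\pi_{1/g}^n B_{1/g}$ by the Galois correspondence.

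The hardest step is verifying that $\mfN(g_n)$ truly has a single non-trivial side with the claimed $y$-intercept, which amounts to controlling $v_{K_{n-1}}(E_n)$. This in turn requires inductively bounding the valuations of the $c_j$ for $j<q^n$, for example by extracting coefficients of $x^{q^{g+m}}$ in (\ref{eq:FormalID}) for each $m<n$ and analyzing Newton polygons at those earlier stages. Thus the induction on $n$ that computes the breaks $u_j$ must run in parallel with inductive bounds on $v_{K_{n-1}}(c_j)$; once these are in place, the telescoping simplification of the iterated Herbrand composition into the closed form $W(n)$ is a purely algebraic calculation.
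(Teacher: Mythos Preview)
Your approach is essentially the paper's: derive the minimal polynomial $f_n(T)=T^{q^g}-a_1^{q^n}T-E_n$ of $c_{q^n}$ over $K_{n-1}$ by comparing coefficients in (\ref{eq:FormalID}), apply Tate's Lemma to get the unique break of each step $K_n/K_{n-1}$ (the paper packages this as Corollary \ref{cor:Tate}(1)), and then iterate the Herbrand transition to assemble the upper filtration of $\Gal(K_n/K_N)$.

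You have, however, misidentified the hardest step. You claim that checking $\mfN(g_n)$ has a single side ``amounts to controlling $v_{K_{n-1}}(E_n)$'' and propose running an auxiliary induction on the valuations of the $c_j$. This is unnecessary: by your own computation, the ramification polynomial is \emph{exactly} the two-term polynomial $T^{q^g}-a_1^{q^n}\pi_{K_n}^{1-q^g}T$, because in characteristic $p$ the substitution $T\mapsto \pi_{K_n}T+\alpha_n$ produces no cross terms from $T^{q^g}$ and the constant term cancels via $f_n(\alpha_n)=0$. Hence $\mfN(g_n)$ has a single side automatically, and the value of $E_n$ never enters. Equivalently, in the paper's formulation, the only nonzero coefficient of $f_n$ of degree strictly between $1$ and $q^g$ is zero, so the hypothesis of Corollary \ref{cor:Tate}(1) is vacuous and the break is immediately $\frac{q^{g+n}v_{n-1}(a_1)}{q^g-1}-1$. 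No inductive bounds on the $c_j$ are required anywhere in the argument.
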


\begin{remark}
Based on Galois representations in other settings one would expect, or at least hope, that the image of $\rho^{1/g}$ is always open. This is known in the case that $d=1$ \cite{Gro79} and in the case $g=1$ and $A=\Z_p$ \cite{Cha00}. Results of Achter and Norman imply that if $g<s$ and $g\neq 2$ then there exists a $p$-divisible group with generic fiber of height $g$ and special fiber of height $s$ such that the image of $\rho^{1/g}$ is open \cite{AN10}.
\end{remark}

Before proving the Theorem, let us point out the following corollary.

\begin{corollary}
\label{cor:1/gBreaks}
Suppose the image of $\rho^{1/g}$ is open in $B_{1/g}^\times$ and $N$ is as in the above theorem. Set $e_N=[K_N:K]$ and
\begin{align*}
u_N &= \inf\{x: \Gal(K_N/K)^{x}=(1)\}\\
l_N &=\inf\{x: \Gal(K_N/K)_{x}=(1)\}.
\end{align*}

Then for all $n>N$ such that
\begin{align}\label{eq:ramLie}
W(n)>l_N
\end{align}
we have
\[
\rho^{1/g}(\mathfrak{G}_{K})^{\frac{W(n)-l_N}{e_N}+u_N}=1+\pi_{1/g}^nB_{1/g}.
\]
\end{corollary}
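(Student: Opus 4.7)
The plan is to deduce this directly from Theorem \ref{thm:1/gBreaks} by transferring the ramification data from the normal subgroup $\rho^{1/g}(\mfG_{K_N})$ up to $\rho^{1/g}(\mfG_K)$ via Herbrand's compatibility (Proposition \ref{prop:ramLie}). Setting $\Gamma := \rho^{1/g}(\mfG_K)$ and $H := \rho^{1/g}(\mfG_{K_N})$, the first step is to observe that the openness hypothesis, together with the definition $\mfG_{K_N} = \ker(\delta_N \circ \rho^{1/g})$, forces
\[
H = \Gamma \cap (1 + \pi_{1/g}^{N+1} B_{1/g}) = 1 + \pi_{1/g}^{N+1} B_{1/g},
\]
and that $\Gamma/H$ is canonically identified with $\Gal(K_N/K)$ (using that $\ker(\rho^{1/g}) \subseteq \mfG_{K_N}$, so the first isomorphism theorem applies cleanly).

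Proposition \ref{prop:ramLie} then gives $\Gamma^x \cap H = H^{\psi_{K_N/K}(x)}$, and Theorem \ref{thm:1/gBreaks} identifies $H^{W(n)} = 1 + \pi_{1/g}^n B_{1/g}$ for $n > N$. Thus to obtain $\Gamma^x = 1 + \pi_{1/g}^n B_{1/g}$ at $x = (W(n)-l_N)/e_N + u_N$ I need to verify two points: first, that $\Gamma^x \subseteq H$, so that the intersection on the left collapses to $\Gamma^x$ itself; and second, that $\psi_{K_N/K}(x) = W(n)$, equivalently $\phi_{K_N/K}(W(n)) = x$.

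For the inclusion $\Gamma^x \subseteq H$, I would note that the image of $\Gamma^x$ in $\Gamma/H \cong \Gal(K_N/K)$ equals $\Gal(K_N/K)^x$, which is trivial exactly when $x \geq u_N$; the hypothesis $W(n) > l_N$ and the strict monotonicity of $\phi_{K_N/K}$ together force $x > u_N$. For the computation of $\phi_{K_N/K}(W(n))$, I would use that on $[l_N,\infty)$ the lower ramification groups of $\Gal(K_N/K)$ are trivial, so the integrand in the definition of $\phi$ is the constant $1/e_N$, and that $\phi_{K_N/K}(l_N) = u_N$ because $\psi$ carries the topmost upper break to the topmost lower break. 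Integrating from $l_N$ to $W(n)$ then yields
\[
\phi_{K_N/K}(W(n)) = u_N + \frac{W(n) - l_N}{e_N} = x,
\]
which is precisely what is required.

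There is no serious obstacle here; the argument is pure bookkeeping with Herbrand's function past the final break of $\Gal(K_N/K)$. The only subtle point is the identity $\phi_{K_N/K}(l_N) = u_N$, which is standard once one recognizes the topmost upper and lower breaks as the endpoints of the nonconstant range of $\phi$. The role of the hypothesis $W(n) > l_N$ is exactly to place us in the linear regime of $\phi_{K_N/K}$, where the affine formula above is valid.
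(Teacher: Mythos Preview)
Your proposal is correct and follows essentially the same approach as the paper's proof: both apply Proposition~\ref{prop:ramLie} with $\Gamma=\rho^{1/g}(\mfG_K)$ and $H=\rho^{1/g}(\mfG_{K_N})$, then invoke Theorem~\ref{thm:1/gBreaks}. The paper's proof is simply much terser, leaving implicit the two points you spell out---that $\Gamma^x\subseteq H$ once $x>u_N$, and that $\phi_{K_N/K}$ is affine with slope $1/e_N$ past $l_N$ with $\phi_{K_N/K}(l_N)=u_N$---whereas you work through them carefully.
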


\begin{proof}
By Proposition \ref{prop:ramLie}, if (\ref{eq:ramLie}) holds then
\[
\rho^{1/g}(\mathfrak{G}_{K})^{\frac{W(n)-l_N}{e_N}+u_N}=\rho^{1/g}(\mathfrak{G}_{K_n})^{W(n)}.
\]
The result then follows from Theorem~\ref{thm:1/gBreaks}.
\end{proof}

Theorem~\ref{thm:1/gBreaks} will follow as an immediate consequence of the following proposition and its corollary:

\begin{proposition}
\label{prop:1/gBreaks}
Suppose the image of $\rho^{1/g}$ is open in $B_{1/g}^\times$ so that there exists an $N$ such that $1+\pi_{1/g}^{n+1}B_{1/g}\subseteq \rho^{1/g}(\mathfrak{G}_K)$ for all $n\geq N$.  Then for all $n> N$, the ramification filtration of $\Gal(K_n/K_{n-1})\cong B_{1/g}/\pi_{1/g}^{}B_{1/g}\cong\F_{q^g}^+$ has a unique upper and lower break at  
\[
B(n):=-v_{n-1}\left(a_1^{-\frac{q^{g+n}}{q^g-1}}\sum_{t=1}^{q^n-1} c_t \sum_{(i_1,\dots,i_t)\in S_t} \prod_{j=1}^t a_{i_j+1}\right)/(q^g-1).
\]
\end{proposition}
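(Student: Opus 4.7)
The plan is to exhibit an explicit polynomial over $K_{n-1}$ having $c_{q^n}$ as a root and of the shape required by Corollary \ref{cor:Tate}(1), then read off the break from its linear coefficient. By Proposition \ref{prop:varphi property2} we have $K_n = K_{n-1}(c_{q^n})$, and the open image hypothesis gives an isomorphism
\[
\Gal(K_n/K_{n-1}) \cong (1+\pi_{1/g}^{n} B_{1/g})/(1+\pi_{1/g}^{n+1}B_{1/g}) \cong B_{1/g}/\pi_{1/g} B_{1/g}
\]
of additive groups, so $[K_n:K_{n-1}] = q^g$.

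First I will extract the polynomial from the functional equation \eqref{eq:FormalID}. Writing $[\pi]_\mcF(x) = \sum_{k=1}^{d+1} a_k x^{q^{g+k-1}}$ with the convention $a_{d+1} = 1$, the characteristic-$p$ Frobenius identity gives $[\pi]_\mcF(x)^{q^j} = \sum_k a_k^{q^j} x^{q^{g+k-1+j}}$. Comparing coefficients of $x^{q^{g+n}}$ on both sides of \eqref{eq:FormalID}: the left side yields $c_{q^n}^{q^g}$, while on the right, the unique contribution involving $c_{q^n}$ comes from $i = q^n$, $k = 1$ and equals $a_1^{q^n} c_{q^n}$. Indeed, terms with $i > q^n$ have lowest degree $iq^g > q^{g+n}$, and terms with $i < q^n$ involve $c_i \in K_{n-1}$ by Proposition \ref{prop:varphi property2}. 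This yields
\[
c_{q^n}^{q^g} - a_1^{q^n} c_{q^n} = C
\]
for some $C \in K_{n-1}$, so $c_{q^n}$ is a root of $P(z) := z^{q^g} - a_1^{q^n} z - C \in K_{n-1}[z]$.

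Next I will verify the hypotheses of Corollary \ref{cor:Tate}(1). Comparing coefficients of $x^{q^g}$ in \eqref{eq:FormalID} yields $a_1 = c_1^{q^g - 1}$, so $(c_1^{q^n})^{q^g - 1} = a_1^{q^n}$ and the kernel of the additive polynomial $z^{q^g} - a_1^{q^n} z$ is $\{0\} \cup \{\zeta c_1^{q^n} : \zeta \in \mu_{q^g - 1}\}$. Because $\mu_{q^g-1} \subset \overline{\F}_p \subseteq k$ and $c_1 \in K_0 \subseteq K_{n-1}$, this kernel lies in $K_{n-1}$, so all $q^g$ (distinct, by separability) roots of $P$ belong to $K_n$. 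Matching $\deg P = q^g = [K_n : K_{n-1}]$ forces $P$ to be irreducible, hence equal to the minimal polynomial of $c_{q^n}$. Once $K_n/K_{n-1}$ is shown to be totally ramified, Corollary \ref{cor:Tate}(1) applies with $q^g = p^{rg}$ playing the role of $q$: all intermediate coefficients of $P$ vanish, so the inequality $v_{n-1}(a_i) \geq v_{n-1}(a_1^{q^n})$ required by the corollary holds trivially, and the unique common upper and lower break is
\[
\frac{q^g \cdot v_{n-1}(a_1^{q^n})}{q^g - 1} - 1 = \frac{q^{g+n} v_{n-1}(a_1)}{q^g - 1} - 1 = B(n).
\]

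The main obstacle will be verifying that $K_n/K_{n-1}$ is totally ramified. Irreducibility of $P$ forces its Newton polygon to be a single segment of slope $-v_{n-1}(C)/q^g$; total ramification then reduces to showing that this slope, in lowest terms, has denominator $q^g$, equivalently $p \nmid v_{n-1}(C)$. I expect to handle this by induction on $n$, simultaneously propagating a precise formula for $v_{n-1}(c_{q^n})$ and $v_{n-1}(C)$ through the recursion encoded in \eqref{eq:FormalID}, with the base case at $n = N+1$ following directly from the structure of the formal $A$-module $\mcF$ and Proposition \ref{prop:varphi property2}.
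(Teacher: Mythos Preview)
Your argument is essentially the paper's own proof: extract the Artin--Schreier--type relation $c_{q^n}^{q^g}-a_1^{q^n}c_{q^n}=C$ from the coefficient of $x^{q^{g+n}}$ in \eqref{eq:FormalID}, identify $z^{q^g}-a_1^{q^n}z-C$ as the minimal polynomial of $c_{q^n}$ over $K_{n-1}$ by matching degrees, and apply Corollary~\ref{cor:Tate}(\ref{cor:Tate1}). Your kernel computation showing that all roots of $P$ already lie in $K_n$ is a pleasant addition that the paper omits, but it is not needed once one knows the extension is Galois.

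The only substantive divergence is your treatment of total ramification. The paper simply asserts that $K_n/K_{n-1}$ is totally ramified; this is because the ambient residue field is taken to be algebraically closed (the running hypothesis $\overline{\F}_p\subseteq K$, needed already in Section~\ref{sec:p-divisible groups} to define $\rho$, forces $k$ to have no finite extensions in the intended setting $R=\overline{\F}_p[[t]]$), so every finite extension of every $K_{n-1}$ is automatically totally ramified. Under that standing assumption your ``main obstacle'' evaporates and the inductive Newton-polygon programme you sketch is unnecessary. If you genuinely want the statement over a base with smaller residue field, then your sketch becomes a real obligation: you would have to control $v_{n-1}(C)$ through the recursion, and as written that step is only an intention, not an argument. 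For the proposition as stated in the paper, just invoke the residue-field hypothesis and drop the last paragraph.
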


\begin{proof}
Let $\varphi\in T^{1/g}(\mathcal{F})$ be invertible.
By Proposition \ref{prop:varphi property2}, $K_n=K_{n-1}(c_{q^n})$ for all $n$.  As $\rho^{1/g}(\mathfrak{G}_K)$ is open in $B_{1/g}^\times$, this implies that there exists an $N$ such that for all $n>N$ one has
\[
 \Gal(K_n/K_{n-1})\cong \frac{1+\pi_{1/g}^{n}B_{1/g}}{1+\pi_{1/g}^{n+1}B_{1/g}}\cong \frac{B_{1/g}}{\pi_{1/g}B_{1/g}}\cong\F_{q^g}^+.
\]
By (\ref{eq:FormalIDCOEF}), we have that
\[
a_1^{q^n} c_{q^n}+b=c_{q^n}^{q^g}.
\] 
Thus $z^{q^g}-a_1^{q^n}z-b$ is the minimal polynomial for $c_{q^n}$ over $K_{n-1}$. Note that the Galois conjugates of $c_{q^n}$ are
\[
c_{q^n}+i a_1^{\frac{q^n}{q^g-1}}
\]
for $i\in \F_{q^g}$. We now pass to the tame extension $K_{n-1}\left(a_1^{\frac{1}{q^g-1}}\right)$ of $K_{n-1}$. Note that passing to a tame extension merely scales the ramification break(s) by the degree of the tame extension (in our case $q^g-1$).  Consider the following generalized Artin--Schreier polynomial,
\[
\prod_{i\in \F_{q^g}} \left(x-\frac{\alpha}{a_1^{\frac{q^n}{q^g-1}}}+i\right)=z^{q^g}-z-ba_1^{-\frac{q^{g+n}}{q^g-1}},
\]
which defines the field extension $K_{n}\left(a_1^{\frac{1}{q^g-1}}\right)$ of $K_{n-1}\left(a_1^{\frac{1}{q^g-1}}\right)$. 
By (\ref{eq:FormalIDCOEF}) we have that $b=c_{q^n}^{q^g}-a_1^{q^n} c_{q^n}$. By definition we have $v_{n-1}(a_1)\geq 1$ and by Proposition \ref{prop:varphi property2} we have $v_{n}(c_{q^n})=1$. Thus $v_{n-1}(a_1^{q^n} c_{q^n})\geq q^{n-g}$ and $v_{n-1}(c_{q^n}^{q^g})=1$. Therefore, by the ultrametric triangle inequality, $v_{n-1}(b)=1$. It follows that $p$ is relatively prime to $v_{n-1}\left(ba_1^{-\frac{q^{g+n}}{q^g-1}}\right)$, since
$$
v_{n-1}\left(ba_1^{-\frac{q^{g+n}}{q^g-1}}\right)\equiv v_{n-1}(b)\equiv 1 \pmod{p} ,
$$ 
and that
\[
v_{n-1}\left(ba_1^{-\frac{q^{g+n}}{q^g-1}}\right)\leq 1-\frac{q^{g+n}}{q^g-1}<0.
\] 
By Artin--Schreier Theory (Proposition \ref{prop:Artin-Schreier_breaks}) we have that the ramification filtration of $\Gal\left(K_n\left(a_1^{\frac{1}{q^g-1}}\right)/K_{n-1}\left(a_1^{\frac{1}{q^g-1}}\right)\right)$ has a unique upper and lower break at
$-v_{n-1}\left(b a_1^{-\frac{q^{g+n}}{q^g-1}}\right)$. It follows that the ramification filtration of $\Gal(K_n/K_{n-1})$ has a unique upper and lower break at
$ -v_{n-1}\left(b a_1^{-\frac{q^{g+n}}{q^g-1}}\right)/(q^g-1)$.
\end{proof}

From the ramification breaks of $\Gal(K_n/K_{n-1})$ the upper and lower ramification filtrations of $\Gal(K_n/K_N)$ can be computed inductively using the identity of Herbrand transition functions (\ref{eq:HerbrandID}):

\begin{corollary}
 Let $B(n)$ and $W(n)$ be as in Proposition \ref{prop:1/gBreaks} and Theorem \ref{thm:1/gBreaks} respectfully. The lower ramification filtration of $\Gamma=\Gal(K_n/K_N)$ is given by
\begin{align*}
\Gamma_x&=\Gal(K_n/K_N)  && \text{ for }\ 0\leq x \leq B(N+1)\\
\Gamma_x&=\Gal(K_n/K_{N+1})  && \text{ for }\ B(N+1)< x \leq B(N+2)\\
&\vdots &&\hspace{4mm} \vdots\\
\Gamma_x&=\Gal(K_n/K_{n-1})  && \text{ for }\  B(n-1)< x\leq B(n)\\
\Gamma_x&=\Gal(K_n/K_n)=(1)  && \text{ for }\ B(N+n)<x.
\end{align*}
The upper ramification filtration of $\Gamma=\Gal(K_n/K_N)$ is given by
\begin{align*}
\Gamma^x&=\Gal(K_n/K_N)  && \text{ for }\ 0< x \leq W(N+1)\\
\Gamma^x&=\Gal(K_n/K_{N+1})  && \text{ for }\ W(N+1)< x \leq W(N+2)\\
&\vdots &&\hspace{4mm} \vdots\\
\Gamma^x&=\Gal(K_n/K_{n-1})  && \text{ for }\  W(n-1) <x\leq W(n) \\
\Gamma^x&=\Gal(K_n/K_n)=(1)  && \text{ for }\ W(n)<x.
\end{align*}
\end{corollary}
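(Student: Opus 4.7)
The proof proceeds by induction on $n\geq N+1$, using Proposition~\ref{prop:1/gBreaks} at each step together with the Herbrand identity \eqref{eq:HerbrandID} for composition of towers. The base case $n=N+1$ is immediate from Proposition~\ref{prop:1/gBreaks}: $\Gal(K_{N+1}/K_N)$ has its unique upper and lower break at $B(N+1)$, and $W(N+1)=B(N+1)$ in this situation because $\phi_{K_{N+1}/K_N}$ is the identity on $[0,B(N+1)]$.

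For the inductive step, assuming the corollary for the tower up to $K_{n-1}$, I write
\[
\phi_{K_n/K_N}=\phi_{K_{n-1}/K_N}\circ\phi_{K_n/K_{n-1}}.
\]
By the inductive hypothesis, $\phi_{K_{n-1}/K_N}$ is piecewise linear with input breakpoints $B(N+1)<\cdots<B(n-1)$, output breakpoints $W(N+1)<\cdots<W(n-1)$, and consecutive slopes $1,1/q^g,\ldots,1/q^{g(n-1-N)}$. By Proposition~\ref{prop:1/gBreaks}, the second factor is the identity on $[0,B(n)]$ and then has slope $1/q^g$. The inequality $B(n)>B(n-1)$ follows from the recursion $B(m+1)+1=q^{g+1}(B(m)+1)$, which comes from $v_m(a_1)=q^g v_{m-1}(a_1)$ (each step $K_{m+1}/K_m$ being totally ramified of degree $q^g$ by Proposition~\ref{prop:1/gBreaks}). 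Composition therefore yields a new breakpoint of $\phi_{K_n/K_N}$ at input $B(n)$ with output $\phi_{K_{n-1}/K_N}(B(n))$, and a direct algebraic computation identifies this output with the value $W(n)$ defined in Theorem~\ref{thm:1/gBreaks}.

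To identify each $\Gamma_x$ with the subgroup $\Gal(K_n/K_m)$ on $(B(m),B(m+1)]$, observe that the derivative of $\phi_{K_n/K_N}$ on this interval is $q^{-g(m-N)}$, so $|\Gamma_x|=q^{g(n-m)}=|\Gal(K_n/K_m)|$ there. By compatibility of the lower numbering with subgroups, $\Gamma_x\cap\Gal(K_n/K_m)=\Gal(K_n/K_m)_x$, and on $(B(m),B(m+1)]$ the latter equals $\Gal(K_n/K_m)$ (the first break of $\Gal(K_n/K_m)$ being $B(m+1)$, by the same analysis applied to the sub-tower $K_m\subseteq\cdots\subseteq K_n$). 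Equality of cardinalities forces $\Gamma_x=\Gal(K_n/K_m)$. The upper filtration statement then follows from $\Gamma^x=\Gamma_{\psi_\Gamma(x)}$: the inverse $\psi_\Gamma$ carries $(W(m),W(m+1)]$ bijectively onto $(B(m),B(m+1)]$. The main obstacle is the bookkeeping required to verify $\phi_{K_{n-1}/K_N}(B(n))=W(n)$, which is a routine but tedious telescoping computation from the closed forms for $B(m)$ and $W(n)$.
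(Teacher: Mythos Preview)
Your proposal is correct and follows essentially the same approach as the paper. The paper does not give a detailed proof of this corollary; it only states, in the sentence preceding the statement, that the filtrations ``can be computed inductively using the identity of Herbrand transition functions \eqref{eq:HerbrandID}'' from the single-step breaks of Proposition~\ref{prop:1/gBreaks}. Your argument is precisely this induction, carried out in full: compose $\phi_{K_n/K_N}=\phi_{K_{n-1}/K_N}\circ\phi_{K_n/K_{n-1}}$, use the recursion $B(m+1)+1=q^{g+1}(B(m)+1)$ to check that the new break $B(n)$ lies beyond the old ones, and then identify the groups via cardinality and the subgroup property of the lower numbering. One small bookkeeping remark: when you invoke ``the first break of $\Gal(K_n/K_m)$ being $B(m+1)$'', make the induction on the length $n-N$ of the tower rather than on $n$ itself, so that the sub-tower $K_m\subset\cdots\subset K_n$ is already covered by the inductive hypothesis.
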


\section{Ramification Breaks of Galois Characters}
\label{sec:GalChar}

 For this section set $A=\Z_p$. We will be studying the ramification breaks of $p$-adic Lie towers associated to the Galois characters
\[
\chi_{0/1}:=\det\circ \rho^{0/1}: \mathfrak{G}_K\to \Z_p^\times.
\]
and
\[
\chi_{1/g}:=\Nm_{1/g}\circ \rho^{1/g}: \mathfrak{G}_K\to \Z_p^\times.
\]
By Theorem~\ref{thm:G2.7}, $\Nm_{1/g}\circ \rho^{1/g}$ is the inverse of $\det\circ \rho^{0/1}$ in $\Z_p$. Our strategy will be to use our result on ramification breaks of towers associated to $\rho^{1/g}$ to study the ramification breaks of $\det\circ \rho^{0/1}$. 

\begin{remark}
One may hope to determine when the image of $\chi_{0/1}(\mathfrak{G}_K)$, or equivalently $\chi_{1/g}(\mathfrak{G}_{K_N})$, is open in $\Z_p^\times$. Since the determinant and reduced norm are open maps, this is technically easier than proving that either $\rho^{1/g}$ or $\rho^{0/1}$ have open image. However, it appears that the only cases when $\chi_{0/1}$ is known to be open are in the cases when $\rho^{0/1}$ or $\rho^{1/g}$ are known to be open; namely when $d=1$, in which case $\chi_{0/1}=\rho^{0/1}$ is surjective \cite{Gro79}, and when $g=1$, in which case $\chi_{1/g}=\rho^{1/g}$ and it can be shown that the image is infinite and hence open (since all infinite closed subgroups of $\Z_p$ are open) \cite{Cha00}. 
\end{remark}

\subsection{Ramification breaks of $\chi_{0/1}$ and $\chi_{1/g}$}

The following is a corollary of Theorem~\ref{thm:1/gBreaks}:

\begin{corollary}
\label{cor:CharacterBreaks}
Suppose the image of $\rho^{1/g}$ is open in $B_{1/g}^\times$ and $N$ is as in Theorem~\ref{thm:1/gBreaks}. Then
\[
\chi_{0/1}(\mathfrak{G}_{K_N})^{W(ng)}=1+p^{n}\Z_p
\]
for all $n> N$.
\end{corollary}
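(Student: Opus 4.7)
The plan is to deduce the corollary from Theorem \ref{thm:1/gBreaks} via two formal reductions and one analytic computation. The first reduction replaces $\chi^{0/1}$ by $\chi^{1/g}$: Theorem \ref{thm:G2.7} gives $\chi^{0/1}(\sigma) = \chi^{1/g}(\sigma)^{-1}$ for every $\sigma \in \mfG_K$, so the two characters share a common kernel and factor through a common quotient of $\mfG_{K_N}$. Since every subgroup of $\Z_p^\ast$ is stable under inversion, this identification preserves the images as subgroups of $\Z_p^\ast$ and preserves their upper ramification groups at every level. It therefore suffices to prove
\[
\chi^{1/g}(\mfG_{K_N})^{W(ng)} = 1 + p^n \Z_p.
\]

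For the second reduction, I would apply Theorem \ref{thm:1/gBreaks} at the index $ng$, which is legitimate because $n > N$ and $g \geq 1$ force $ng > N$, to obtain $\rho^{1/g}(\mfG_{K_N})^{W(ng)} = 1 + \pi_{1/g}^{ng} B_{1/g}$. Because $\chi^{1/g} = \Nm_{1/g} \circ \rho^{1/g}$ is a composition of continuous surjections of profinite groups and upper numbering is preserved by such maps, this in turn reduces the corollary to the identity
\[
\Nm_{1/g}\bigl(1 + \pi_{1/g}^{ng} B_{1/g}\bigr) = 1 + p^n \Z_p.
\]

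The displayed identity is the main technical step; it is a standard fact about the reduced norm filtration on a central division algebra of degree $g$ over $\Q_p$, namely that $\Nm_{1/g}$ sends $1 + \pi_{1/g}^{k} B_{1/g}$ onto $1 + p^{\lceil k/g \rceil}\Z_p$, which we apply at $k = ng$. The containment $\subseteq$ can be verified by passing to the splitting field $\Q_p^{\textnormal{nr}}$: writing $\pi_{1/g}^g = p u$ for a unit $u \in B_{1/g}^\ast$ gives $1 + \pi_{1/g}^{ng} B_{1/g} = 1 + p^n B_{1/g}$, and a direct expansion of $\det(I + p^n Y)$ inside $M_g(\Q_p^{\textnormal{nr}})$ shows that the determinant lies in $1 + p^n \Z_p$. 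Surjectivity onto $1 + p^n \Z_p$ can be obtained by restricting the reduced norm to the maximal unramified subfield inside $B_{1/g}$, where it coincides with the usual norm of an unramified extension and hence surjects onto the required principal unit group. Assembling the three steps yields the corollary.
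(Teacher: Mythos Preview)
Your proof is correct and follows essentially the same route as the paper's: apply Theorem~\ref{thm:1/gBreaks} at index $ng$, push the resulting filtration group through the reduced norm, and then invoke Theorem~\ref{thm:G2.7} to pass from $\chi^{1/g}$ to $\chi^{0/1}$. The only difference is that the paper obtains the identity $\Nm_{1/g}\bigl(1+\pi_{1/g}^{k}B_{1/g}\bigr)=1+p^{\lceil k/g\rceil}\Z_p$ by citing \cite[Lemma~5]{Rie70}, whereas you sketch a direct argument via the splitting field and the maximal unramified subfield; your sketch is correct, and your explicit appeal to the compatibility of upper numbering with quotients makes transparent a step the paper leaves implicit.
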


\begin{proof}
By \cite[Lemma 5]{Rie70}:
\begin{align*}
\Nm_{1/g}(1+\pi_{1/g}^{n}B_{1/g})&=1+p^{\lceil n/g\rceil} \Z_p.
\end{align*}
The corollary then follows from Theorem~\ref{thm:1/gBreaks} and Theorem~\ref{thm:G2.7}.
\end{proof}

Combining the above Corollary with Corollary \ref{cor:1/gBreaks},

\begin{corollary}
Suppose the image of $\rho^{1/g}$ is open in $B_{1/g}^\times$ and $N$ is as in Theorem~\ref{thm:1/gBreaks}. Then for all sufficiently large $n$,
\[
\chi_{0/1}(\mathfrak{G}_{K})^{aW(ng)+b}=1+p^n\Z_p
\]
for some fixed constants $a,b$.
\end{corollary}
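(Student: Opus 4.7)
The plan is to mimic the passage from Theorem \ref{thm:1/gBreaks} to Corollary \ref{cor:1/gBreaks}, now applied to the Galois character $\chi_{0/1}$ in place of the representation $\rho^{1/g}$. First I would set $\Gamma := \chi_{0/1}(\mfG_K)$ and $H := \chi_{0/1}(\mfG_{K_N})$. Because $\mfG_{K_N}$ is normal in $\mfG_K$, the subgroup $H$ is normal in $\Gamma$, and the quotient $\Gamma/H$ is a finite subquotient of $\Gal(K_N/K)$. Its ramification filtration therefore has only finitely many breaks, so I would introduce
\[
e_N' := [\Gamma:H], \quad u_N' := \inf\{x : (\Gamma/H)^x = (1)\}, \quad l_N' := \inf\{x : (\Gamma/H)_x = (1)\}.
\]

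The next step is to invoke Proposition \ref{prop:ramLie}. For $x > u_N'$ the relation $\Gamma^x H/H = (\Gamma/H)^x = (1)$ forces $\Gamma^x \subseteq H$, so the proposition simplifies to $\Gamma^x = H^{\psi_{\Gamma/H}(x)}$. Since $\phi_{\Gamma/H}$ has its final slope change at $l_N'$, its inverse $\psi_{\Gamma/H}$ is linear of slope $e_N'$ on $(u_N',\infty)$, and solving $\psi_{\Gamma/H}(x) = W(ng)$ yields
\[
x \; = \; u_N' + \frac{W(ng)-l_N'}{e_N'} \; = \; a\,W(ng) + b, \qquad a := \frac{1}{e_N'},\ \ b := u_N' - \frac{l_N'}{e_N'}.
\]

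Combining this with Corollary \ref{cor:CharacterBreaks}, which supplies $H^{W(ng)} = 1 + p^n\Z_p$ for every $n > N$, then gives
\[
\chi_{0/1}(\mfG_K)^{a\,W(ng)+b} \; = \; H^{W(ng)} \; = \; 1 + p^n\Z_p,
\]
valid as soon as $W(ng) > u_N'$, which holds for all sufficiently large $n$ since $W(ng) \to \infty$. There is no substantive obstacle: the argument is a direct transcription of the proof of Corollary \ref{cor:1/gBreaks} to the one-dimensional (character) setting, with Corollary \ref{cor:CharacterBreaks} playing the role of Theorem \ref{thm:1/gBreaks}. The only thing to keep an eye on is that $u_N'$ and $l_N'$ depend on the (a priori unknown) ramification of the quotient $\Gamma/H$, but this is harmless because the statement merely asserts the existence of constants $a,b$.
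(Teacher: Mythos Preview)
There is a genuine gap in the identification $H^{W(ng)} = 1+p^{n}\Z_p$. Write $L_\infty$ for the fixed field of $\ker(\chi_{0/1})$, so that $\Gamma=\chi_{0/1}(\mfG_K)=\Gal(L_\infty/K)$. The subgroup $H=\chi_{0/1}(\mfG_{K_N})\leq\Gamma$ is then $\Gal(L_\infty/E)$ with $E:=K_N\cap L_\infty$, and the filtration $H^{y}$ appearing in Proposition~\ref{prop:ramLie} is the upper ramification filtration of $H$ \emph{over the base field $E$}. On the other hand, Corollary~\ref{cor:CharacterBreaks} computes the upper ramification filtration of $\chi_{0/1}(\mfG_{K_N})\cong\Gal(K_N L_\infty/K_N)$ \emph{over the base field $K_N$}. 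These two filtrations live on isomorphic groups but there is no reason for them to coincide: in general $K_N\not\subseteq L_\infty$ (indeed $\ker(\Nm_{1/g})\cap\rho^{1/g}(\mfG_K)$ is far from being contained in $1+\pi_{1/g}^{N+1}B_{1/g}$), so $K_N/E$ is a nontrivial, typically ramified, extension, and base-changing the ramification numbering from $E$ to $K_N$ is not governed by any Herbrand function internal to your $\Gamma$. Hence the substitution of Corollary~\ref{cor:CharacterBreaks} into Proposition~\ref{prop:ramLie} is not justified.

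The paper sidesteps this by staying inside the $\rho^{1/g}$-tower until the very end: Corollary~\ref{cor:1/gBreaks} already gives $\rho^{1/g}(\mfG_K)^{\,aW(ng)+b}=1+\pi_{1/g}^{ng}B_{1/g}$ over $K$, and then one simply applies $\Nm_{1/g}$. Since $\chi_{1/g}(\mfG_K)$ is a \emph{quotient} of $\rho^{1/g}(\mfG_K)$ and upper numbering is compatible with quotients, this yields $\chi_{0/1}(\mfG_K)^{\,aW(ng)+b}=\Nm_{1/g}(1+\pi_{1/g}^{ng}B_{1/g})=1+p^{n}\Z_p$ with no base-field ambiguity. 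If you want to salvage your route, work instead with $\Gamma'=\Gal(K_N L_\infty/K)$ and $H'=\Gal(K_N L_\infty/K_N)$; then the fixed field of $H'$ really is $K_N$, Proposition~\ref{prop:ramLie} applies as you intended, and projecting to the quotient $\chi_{0/1}(\mfG_K)$ finishes the argument.
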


\appendix
\section{A mistaken generalization of Tate's Lemma}\label{appendix}

The proof of \cite[Theorem 4.6]{Cha00}, which gives a comparison between the Lie and Ramification filtrations in the case that $g=1$, relies upon \cite[Lemma 4.4]{Cha00}, a generalized version of a lemma of Tate. Unfortunately, as pointed out by a referee, this generalization is false, as we now explain.

The original version of Tate's Lemma \cite[Lemma 1.5]{Gro79} relates upper ramification breaks of $\Gal(K(\alpha)/K)$, where $\alpha$ is a root of a degree $n$ Eisenstein polynomial $f(x)$, to the the Newton polygon of the ramification polynomial 
\[
\alpha^{-n} f(\alpha x+\alpha).
\]
Chai's proposed generalization of Tate's Lemma \cite[Lemma 4.4]{Cha00} claims that if $f(x)\in K[x]$ is a polynomial with the property that for each root $\alpha$ of $f(x)$ the extension $K(\alpha)$ is totally ramified, and if $\pi$ is a uniformizer of $K(\alpha)$, then the upper ramification breaks of $\Gal(K(\alpha)/K)$ can be determined from the Newton polygon of the polynomial
\[
\pi^{-n} f(\pi x+\alpha).
\]
A referee provided the following counterexample to this generalization of Tate's Lemma.

\begin{example}
Let $K=\F_q((t))$ and $f(x)=x^p-x+t^{-r}$ with $p\nmid r$. Adjoining any root $\alpha$ of $f(x)$ to $K$ gives a degree $p$ totally ramified extension $K(\alpha)$ of $K$. By Artin--Schreier Theory (see \cite{Has35} or \cite[IV \S 2 Exercise 5]{Ser67}), $\Gal(K(\alpha)/K)$ has a unique upper ramification break at $r$. Let $\pi$ be a uniformizer of $K(\alpha)$. As the Galois conjugates of $\alpha$ are $\alpha+1$, $\alpha+2$, $\dots$, $\alpha+p-1$, we have that 
\[
\pi^{-p} f(\pi x+\alpha)=x\left(x-\frac{1}{\pi}\right)\left(x-\frac{2}{\pi}\right)\cdots \left(x-\frac{p-1}{\pi}\right).
\]
But the Newton polygon of this polynomial does not depend on $r$, and therefore it can not determine the upper ramification break in these cases.
\end{example}

\end{document}